\numberwithin{equation}{section}
\theoremstyle{plain} 
\newtheorem{theorem}{Theorem}[section]
\newtheorem*{thma}{Theorem A}
\newtheorem*{thmb}{Theorem B}
\newtheorem*{thmc}{Theorem C}
\newtheorem{proposition}[theorem]{Proposition}
\newtheorem{corollary}[theorem]{Corollary}
\newtheorem{lemma}[theorem]{Lemma}
\newtheorem*{question*}{Question}
\theoremstyle{definition} 
\newtheorem{definition}[theorem]{Definition}
\newtheorem{example}[theorem]{Example}
\newtheorem{question}[theorem]{Question}
\theoremstyle{remark} 
\newtheorem{remark}[theorem]{Remark}
\newcommand{\Z}{\mathbb Z}
\newcommand{\DD}{\mathsf D}
\newcommand{\Se}{\mathbb{S}}
\newcommand{\bo}{\operatorname{b}\nolimits}
\DeclareMathOperator{\sCM}{\underline{\mathsf{CM}}}
\newcommand{\Hom}{\operatorname{Hom}\nolimits}
\newcommand{\End}{\operatorname{End}\nolimits}
\newcommand{\Ext}{\operatorname{Ext}\nolimits}
\newcommand{\Tor}{\operatorname{Tor}\nolimits}
\newcommand{\otimesk}{\otimes_k}
\newcommand{\otimesS}{\otimes_S}
\newcommand{\bd}{\DD^{\bo}}
\newcommand{\Mod}{\mathsf{Mod}\,}
\newcommand{\RHom}{\mathbf{R}\strut\kern-.2em\operatorname{Hom}\nolimits}
\newcommand{\Lotimes}{\mathop{\stackrel{\mathbf{L}}{\otimes}}\nolimits}
\newcommand{\Image}{\operatorname{Im}\nolimits}
\DeclareMathOperator{\Tot}{Tot}
\author{Louis--Philippe Thibault}
\address{Institutt for matematiske fag, NTNU, 7491 Trondheim, Norway}
\email{louis.thibault@ntnu.no}
\dedicatory{Dedicated to the memory of Ragnar--Olaf Buchweitz}
\begin{document}

\title{Preprojective algebra structure on skew-group algebras}



\begin{abstract}
We give a class of finite subgroups $G<SL(n, k)$ for which the skew-group algebra $k[x_1,\ldots, x_n]\#G$ does not admit the grading structure of a higher preprojective algebra. Namely, we prove that if a finite group $G<SL(n, k)$ is conjugate to a finite subgroup of $SL(n_1, k)\times SL(n_2, k)$, for some $n_1, n_2\geq 1$, then the skew-group algebra $k[x_1,\ldots,x_n]\#G$ is not Morita equivalent to a higher preprojective algebra. This is related to the preprojective algebra structure on the tensor product of two Koszul bimodule Calabi--Yau algebras. We prove that such an algebra cannot be endowed with a grading structure as required for a higher preprojective algebra. 

Moreover, we construct explicitly the bound quiver of the higher preprojective algebra over a finite-dimensional Koszul algebra of finite global dimension. We show in addition that preprojective algebras over higher representation-infinite Koszul algebras are derivation-quotient algebras whose relations are given by a superpotential.  
\end{abstract}

\thanks{2010 {\em Mathematics Subject Classification.} 16G10, 16E65, 16S35, 16S36, 16S37}
\thanks{{\em Key words and phrases.} Skew-group algebra, Preprojective algebra, Higher Auslander--Reiten theory, Koszul algebra, Superpotential}
\maketitle
\tableofcontents

\setcounter{tocdepth}{2}


\section{Introduction}

Preprojective algebras were first defined by Gel$'$fand and Ponomarev \cite{GP79} to study the representation theory of finite-dimensional hereditary algebras. Baer, Geigle and Lenzing \cite{BGL87} later proposed an equivalent definition of the preprojective algebra as a tensor algebra over $\Lambda$ of the inverse Auslander--Reiten translation: 
\[
	\Pi(\Lambda) := T_{\Lambda}\Ext_{\Lambda}^1(D\Lambda, \Lambda).
\]	
In the setting of Iyama's program of higher-dimensional Auslander--Reiten theory, this construction was generalized to algebras of higher global dimension. They are characterized by the following homological property: the $n$-preprojective algebras over $(n-1)$-representation-infinite algebras are exactly the bimodule $n$-Calabi--Yau algebras of Gorenstein parameter $1$ \cite{Kel11,MM11,HIO14,AIR15}. \\

Higher preprojective algebras and higher representation-infinite algebras have been studied extensively in the last few years. Numerous algebraic and geometric applications were developed (see, e.g., \cite{IO13, HIO14, AIR15}). In the context of non-commutative algebraic geometry, Minamoto and Mori \cite{MM11} studied the closely related notions of graded AS-regular algebras of Gorenstein parameter $1$ and (quasi)-Fano algebras. Buchweitz and Hille \cite{BH} also proved that $n$-representation-infinite algebras arise as the endomorphism algebras of certain tilting sheaves. Moreover, in the setting of higher Auslander--Reiten theory, the authors in \cite{HIMO14} generalized the notion of Geigle--Lenzing spaces \cite{GL87} and canonical algebras, which have had many applications in representation theory. There, a certain class of $n$-canonical algebras are $n$-representation-infinite. \\

When $G<SL(2, k)$ is a finite group, Reiten and Van den Bergh \cite{RVdB89} established a Morita equivalence between the skew-group algebra $k[x, y]\# G$ and the preprojective algebra over an extended Coxeter--Dynkin quiver associated to $G$ via the McKay correspondence, creating yet another bridge between representation theory and the study of quotient singularities. It is thus natural to ask whether the statement generalizes to the setting of higher Auslander--Reiten theory. 

\begin{question*}
Let $R$ be the polynomial ring in $n$ variables. Is the skew-group algebra $R\#G$ Morita equivalent to a higher preprojective algebra for any finite group $G<SL(n, k)$?
\end{question*}

Interesting applications arise when this is the case. For example, by \cite{AIR15}, we then obtain a triangle equivalence
\[
	\sCM^{\Z}(R^G)\xrightarrow{\sim}\bd(A) 
\]
between the stable category of graded Cohen--Macaulay modules over the invariant ring $R^G$ and the derived category of some finite-dimensional algebra $A$. We also have an equivalence
\[
	\sCM(R^G)\xrightarrow{\sim}\mathcal C_{n-1}(A),  
\]
where $\mathcal C_{n-1}(A)$ is the generalized $(n-1)$-cluster category (\cite{Ami09, Guo11}). In \cite{AIR15}, the authors showed that if $G$ is cyclic and satisfies an extra condition, then $R\#G$ is isomorphic to an $n$-preprojective algebra. In this paper, we prove a partial converse to this statement and generalize it to any finite subgroup of $SL(n, k)$, thus providing a negative answer to our  question. 

\begin{thma}[Theorem \ref{important} \& Corollary \ref{cor:main_section5}]
If $G$ is conjugate to a finite subgroup of $SL(n_1, k)\times SL(n_2, k)$ for some $n_1,n_2\geq 1$, then $R\#G$ is not Morita equivalent to a higher preprojective algebra.
\end{thma}

The study of this question naturally leads to the analysis of the preprojective algebra structure on the tensor product of two bimodule Calabi--Yau Koszul algebras. In this paper, we show that such algebras cannot be endowed with a grading as required for a preprojective algebra. One advantage of the Koszulity assumption is that we can describe Calabi-Yau Koszul algebras as derivation-quotient algebras $D(\omega, n-2)$, whose relations are given by differentiating a superpotential $\omega$, as proved in \cite{BSW10}. An important ingredient is then to describe the superpotential in a tensor product of two such algebras. 

\begin{thmb}[Theorem \ref{shuffle}, Theorem \ref{no_tensor} \& Corollary \ref{cor:no_tensor}]
Let $A := A^1\otimesk A^2$ be the tensor product of two bimodule $n_i$-Calabi--Yau Koszul algebras. Let $\omega_i$ be a superpotential so that $A^i\cong D(\omega_i, n_i-2)$, $i=1,2$. 

\begin{enumerate}[a)]
\item The shuffle product $\omega := \omega_1\shuffle \omega_2$ is a superpotential in $A$ and there is an isomorphism $A \cong D(\omega, n-2)$.
\item $A$ is not Morita equivalent to a higher preprojective algebra.
\end{enumerate}
\end{thmb}

Both Theorem A and Theorem Bb) rely on the fact that if we find a grading on these algebras such that they are bimodule $n$-Calabi--Yau of Gorenstein parameter $1$, then we force the degree $0$ part to be infinite-dimensional. This suggests that there could be analogues of higher hereditary algebras in the setting of infinite-dimensional algebras. \\

The objects of interest in this paper are Koszul preprojective algebras. We generalize the quiver construction of classical preprojective algebras to higher preprojective algebras over Koszul algebras. Moreover, we show that the preprojective algebra over a Koszul $n$-representation-infinite algebra has a superpotential and is given by a derivation-quotient algebra, generalizing a theorem by Keller \cite{Kel11} which states that $3$-preprojective algebras are Jacobian algebras given by a potential. This description was also shown independently by Grant and Iyama \cite{GI19} for preprojective algebras over Koszul $n$-hereditary algebras. 

\begin{thmc}[Theorem \ref{ext_construction}, Corollary \ref{construction} \& Theorem \ref{thm:der_quotient_preproj}]
Let $\Lambda = kQ/\langle M\rangle$ be a basic Koszul finite-dimensional algebra of global dimension $n$ and $\Pi:= T_{\Lambda}\Ext^n_{\Lambda}(D\Lambda, \Lambda)$. 
\begin{enumerate}[a)]
\item Let $\overline Q$ be the quiver obtained from $Q$ by adding an arrow $a_q: j\to i$ for each $k$-basis element $q$ in $e_j (\bigcap_{\mu} (kQ_1^{\otimes\mu}\otimes_{kQ_0} M \otimes_{kQ_0} kQ_1^{\otimes n-\mu-2})) e_i$ and let $\tilde M$ be a certain space of quadratic relations. Then
\[
	\Pi\cong k\overline Q/\langle M, \tilde M\rangle.
\]   
\item If $n\leq 2$ or $\Lambda$ is $n$-representation-infinite, then $\Pi$ is a derivation-quotient algebra:
\[
	\Pi\cong D(\omega, (n+1)-2), 
\]
whose relations are given by differentiating a superpotential $\omega$ of order $n+1$ by paths of length $n-1$.  
\end{enumerate}
\end{thmc}

The structure of this paper is as follows. In section 2, we give background material on higher preprojective algebras. We also define Koszul algebras and recall the notion of a superpotential, described in \cite{BSW10}, that plays an important role in the grading structure of the algebras we consider. Section 3 is devoted to proving Theorem C. We give an explicit construction of the quiver of a basic $(n+1)$-preprojective algebra $\Pi(\Lambda)$ over a Koszul algebra $\Lambda$, generalizing the quiver construction in the classical case. We then prove that if $\Lambda$ is $n$-representation-infinite, then such a preprojective algebra is a derivation-quotient algebra with quadratic relations. In section 4, we show Theorem B. We first prove in section 4.1 that the shuffle product of the superpotentials in two Koszul bimodule Calabi--Yau algebras is a superpotential in the tensor product of these two algebras. We then use it to prove in section 4.2 that the tensor product of two Koszul bimodule Calabi--Yau algebras cannot have the structure of a preprojective algebra. In section 5, we extend these results in the context of skew-group algebras, proving Theorem A, which generalizes a partial converse to a theorem in \cite{AIR15}. In section 5.1, we directly use the results from section 4 to give a class of groups for which the skew-group algebra is not Morita equivalent to a preprojective algebra. In section 5.2, we enlarge this class to include any group which embeds in $SL(n_1, k)\times SL(n_2, k)$.      

\subsection*{Notation}
We let $k$ be an algebraically closed field of characteristic $0$. We denote by $D := \Hom_k (-, k)$ the $k$-dual. Let $A$ be a (graded) $k$-algebra. We denote by $A^e:= A\otimesk A^{op}$ the enveloping algebra. 
We denote by $\mathsf{Mod}\, A$ the category of left $A$-modules, $\mathsf{Bimod}\, A\cong \Mod A^e$ the category of $A$-bimodules, $\mathsf{Gr}\, A$ the category of graded left $A$-modules and $\mathsf{Grproj}\, A$ the category of graded projective left $A$-modules. If the names are written with a lowercase, then they denote the respective full subcategories of finitely generated modules. Moreover, we denote by $\DD(-)$ and $\DD^b(-)$ the derived and bounded derived categories. The category $\mathsf C^b(-)$ is the category of bounded complexes. Finally, $\mathsf{per}\, A$ is the thick subcategory of $\DD(\Mod A)$ generated by $A$. 


\section*{Acknowledgement}

The results described in this article were obtained while the author was a Ph.D. student at the University of Toronto. The author wishes to thank his supervisor, Ragnar-Olaf Buchweitz, for the numerous insights, suggestions, and the constant support throughout his Ph.D. The author also thanks Osamu Iyama for discussing some of the results during his stay in Toronto. Finally, the author thanks his colleague Vincent G\'elinas for his helpful comments on the first drafts of this paper, as well as Steffen Oppermann, Mads Hustad Sand\o y and Torkil Stai for their valuable inputs on later drafts. 


\section{Preliminaries}


\subsection{Higher preprojective algebras}
Let $\Lambda$ be a finite-dimensional algebra of global dimension $n$. Let 
\[
	\Se:= D\Lambda\Lotimes_{\Lambda}- : \bd(\mathsf{mod}\, \Lambda)\to \bd(\mathsf{mod}\, \Lambda)
\]
be the Serre functor and denote by $\Se_n$ the composition $\Se_n:= \Se\circ [-n]$. The inverse of the Serre functor is given by 
\[
	\Se^{-1}=\RHom_{\Lambda}(D\Lambda, -): \bd(\mathsf{mod}\, \Lambda)\to \bd(\mathsf{mod}\, \Lambda).
\]

\begin{definition}[{\cite[Definition 2.7]{HIO14}}]
We say that $\Lambda$ is \emph{$n$-representation-infinite} if 
\[
	\Se_n^{-\ell}(\Lambda)\in\mathsf{mod}\,\Lambda
\]
for any $\ell\geq 0$. 
\end{definition}

By definition, $\Se_n^{-\ell}(\Lambda)$ is a complex, so the condition means that it has only cohomology in degree $0$. This cohomology is then necessarily equal to $\Ext_{\Lambda}^n(D\Lambda, \Lambda)^{\otimes_{\Lambda} \ell}$.

\begin{definition}[{\cite[Definition 2.11]{IO13}}]
Let $\Lambda$ be an $n$-representation-infinite algebra. The \emph{$(n+1)$-preprojective algebra} of $\Lambda$ is defined as the tensor algebra
\[
	\Pi(\Lambda) = T_{\Lambda}\Ext_{\Lambda}^n(D\Lambda, \Lambda)\cong \bigoplus_{\ell\geq 0} \Se_n^{-\ell}(\Lambda).
\]
\end{definition}

These notions generalize the concepts of hereditary representation-infinite algebras and of preprojective algebras to algebras of higher global dimension. 

\begin{definition}[{\cite[Definition 3.2]{AIR15}}]
\label{def_bim_cy}
Let $A = \bigoplus_{\ell\geq 0} A_{\ell}$ be a positively graded $k$-algebra and $a\in\Z$. We say that $A$ is \emph{(bimodule) $n$-Calabi--Yau of Gorenstein parameter $a$} if $A\in\mathsf{per}\, A^e$ and there exists a graded projective $A^e$-module resolution $P_{\bullet}$ of $A$ and an isomorphism 
\begin{equation}
\label{self_dual}
	P_{\bullet}\cong P_{\bullet}^{\vee}[n](-a)\quad\text{in } \mathsf C^b(\mathsf{grproj}\,A^e),
\end{equation}
where $(-)^{\vee} = \Hom_{A^e}(-, A^e)$. Note that this implies that $\mathsf{gl.dim}\, A = n$ (\cite[Proposition 2.4]{AIR15}). We call an algebra \emph{(bimodule) $n$-Calabi--Yau} if it satisfies the previous homological property, forgetting the grading. 
\end{definition}

Calabi--Yau algebras give rise to Calabi--Yau categories as follows. 

\begin{proposition}[{\cite[Proposition 3.2.4]{Gin06}; \cite[Lemma 4.1]{Kel08}; \cite[Proposition 2.4]{AIR15}}]
\label{prop:cy_duality}
Let $A$ be a bimodule $n$-Calabi--Yau algebra. For any $X\in \DD(A)$ and $Y\in \DD^{\mathsf{fd}} (A)$, the derived category of complexes with finite-dimensional cohomology, there is a functorial isomorphism
\[	
	\Hom_{\DD(A)}(X, Y )\cong D\Hom_{\DD(A)}(Y, X[n]).
\]
Therefore, $\DD^{\mathsf{fd}} (A)$ is an $n$-Calabi--Yau triangulated category.
\end{proposition}

\begin{theorem}[{\cite[Theorem 4.8]{Kel11}; \cite[Corollary 4.13]{MM11}; \cite[Theorem 4.36]{HIO14}; \cite[Theorem 3.4]{AIR15}}]
\label{structure}
There is a one-to-one correspondence 
\begin{align*} 
\left\{\text{$n$-representation-infinite algebras $\Lambda$}\right\}/\cong&\xleftrightarrow{1:1}\left\{ \text{\parbox{6.8cm}{bimodule $(n+1)$-Calabi--Yau algebras $A$ of Gorenstein parameter $1$ such that $\dim_k A_0 <\infty$}}\right\}/\cong\\
\Lambda \qquad\qquad\qquad\qquad\quad&\longmapsto \qquad\qquad\qquad\qquad\quad \Pi(\Lambda) \\ 
 A_0 \qquad\qquad\qquad\qquad\;\;&\longmapsfrom \qquad\qquad\qquad\qquad\quad A
\end{align*}

\end{theorem}

\begin{remark}
\label{rem:locally_finite}
As mentioned in \cite[Remark 3.2]{AIR15}, if $A = \bigoplus_{\ell\geq 0} A_{\ell}$ is locally finite, that is, $\dim_k A_{\ell}<\infty$ for all $\ell$, then $A$ is bimodule $(n+1)$-Calabi--Yau of Gorenstein parameter $1$ if and only if $A\in \mathsf{per} \, A^e$ and there exists an isomorphism
\[
	\RHom_{A^e} (A, A^e)[n+1](-1)\cong A\quad \text{in } \DD(\mathsf{Gr}\, A^e).
\]
The minimal projective $A$-bimodule resolution $P_{\bullet}$ of $A$ then satisfies $(\ref{self_dual})$. The (ungraded) bimodule Calabi--Yau algebras are also characterized by those properties, if we remove the graded part. If $A$ is bimodule $(n+1)$-Calabi--Yau of Gorenstein parameter $1$ and $A_0$ is finite-dimensional, then $A_{\ell}\cong \Ext^n_{A_0}(DA_0, A_0)^{\otimes_{A_0}\ell}$ is finite-dimensional for any $\ell$, and so $A$ is locally finite.\\

By our definition, the degree $0$ part of a preprojective algebra is a higher representation-infinite algebra, and so is finite-dimensional. This property is essential for many developments on preprojective algebras, for example for the Serre functor to be well-defined. However, when we use the terminology  \emph{bimodule Calabi--Yau algebra of Gorenstein parameter $1$}, we do not necessarily require the degree $0$ part to be finite-dimensional. \\

Note that the only finite-dimensional $n$-Calabi--Yau algebras $A$ are semisimple. In fact, by Proposition \ref{prop:cy_duality}, the Calabi--Yau property implies 
\[
	\Hom_A(X, Y)\cong D\Ext_A^n(Y, X),
\]
 for any finite-dimensional modules $X$ and $Y$. By considering the case where $Y$ is a projective module and $X$ is a module such that there exists a non-zero morphism $X\to Y$, we see that this can only hold if $n=0$. In this paper, we exclude semisimple Calabi--Yau algebras, as we are mainly interested in higher preprojective algebra structures.       
\end{remark}


\subsection{Koszul algebras}
\label{Koszul}

We describe Koszul algebras, following mostly \cite{BGS96}. Throughout this subsection, we let $A = \bigoplus_{i\geq 0} A_i$ be a positively graded $k$-algebra such that $S: = A_0$ is a finite-dimensional semisimple algebra. All tensor products are over $S$.

\begin{definition}
The algebra $A$ is \emph{Koszul} if $S$, considered as a left graded $A$-module, admits a graded projective resolution
\[
	\cdots \to Q_2 \to Q_1 \to Q_0\to S\to 0
\]
such that $Q_{\ell}$ is generated in degree $\ell$.
\end{definition}

It is well-known that any Koszul algebra is \emph{quadratic}, that is, there is a graded isomorphism
\[
	A\cong T_S V/\langle M \rangle,
\]
where $V$ is a $S$-bimodule placed in degree $1$ and $M$ is a $S$-bimodule such that $M\subset V\otimes V$. Here $\langle M \rangle$ denotes the $2$-sided ideal generated by $M$.

\begin{definition}
\label{koszul_complex}
The \emph{Koszul complex} is described as
\[
	 \cdots\to Q_n\to Q_{n-1}\to\cdots\to Q_0\to 0,
\]
where each $Q_{\ell}$ is given by 
\[
	A\otimes \bigcap_{\mu} (V^{\otimes\mu}\otimes M \otimes V^{\otimes \ell-\mu-2})\subset A\otimes V^{\otimes \ell} 
\]
and the differentials are the restrictions of the maps 
\[
	A\otimes V^{\otimes \ell}\to A\otimes V^{\otimes (\ell-1)},
\]
given by 
\[
	a\otimes v_1\otimes\cdots\otimes v_{\ell}\mapsto av_1\otimes v_2\otimes\cdots\otimes v_{\ell}.
\]
We write 
\[
 K_{\ell}:=\bigcap_{\mu} (V^{\otimes\mu}\otimes M \otimes V^{\otimes \ell-\mu-2}).
\]
\end{definition}

\begin{theorem}[{\cite[Theorem 2.6.1]{BGS96}}]
Let $A = T_S V/\langle M \rangle$ be a quadratic algebra. Then $A$ is Koszul if and only if the Koszul complex is a projective left $A$-module resolution of $S$.
\end{theorem}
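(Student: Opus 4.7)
The plan is to prove both implications. The backward direction is immediate from the construction of the Koszul complex: each term $A\otimes K_l$ is projective as a left $A$-module, and since $K_l\subset V^{\otimes l}$ lives in internal degree $l$, the module $A\otimes K_l$ is generated in degree $l$; hence if the augmented Koszul complex is exact, it provides precisely a projective resolution of $S$ with the degree property demanded by the definition of a Koszul algebra. For the forward direction, I would first verify that the Koszul complex is actually a complex: iterating the differential on an element of $K_l\subset M\otimes V^{\otimes l-2}$ produces $v_1 v_2\otimes v_3\otimes\cdots\otimes v_l$ with $v_1\otimes v_2\in M$, and $v_1 v_2=0$ in $A$ by definition of the relations. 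Exactness at $P^0$ follows from the augmentation $A\to S$ whose kernel is $A\cdot V$.

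The substantive content is to show that if $A$ is Koszul, then the Koszul complex has vanishing higher homology. I would fix a minimal graded projective resolution $P^\bullet$ of $S$ and write $P^l = A\otimes_S E_l$ with $E_l$ concentrated in degree $l$ (possible exactly because $A$ is Koszul). Examining the degree-$l$ component of the differential $P^l\to P^{l-1}$ and using minimality yields an inclusion $E_l\hookrightarrow V\otimes_S E_{l-1}$; iterating gives $E_l\subset V^{\otimes l}$. The relation $d^2=0$, applied at each intermediate stage, then forces $E_l$ to sit inside $V^{\otimes\mu}\otimes M\otimes V^{\otimes l-\mu-2}$ for every admissible $\mu$, so $E_l\subset K_l$. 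To obtain the reverse containment, I would construct an explicit chain map from the Koszul complex to $P^\bullet$ lifting the identity on $S$ and, using the comparison theorem together with minimality of $P^\bullet$, conclude that this map is an isomorphism in each homological degree.

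The main obstacle is precisely the identification $E_l\cong K_l$, or equivalently the reverse containment $K_l\subset E_l$. The individual containments for $E_l$ are each elementary, but ruling out additional generators requires the full strength of the Koszul hypothesis. The cleanest route, as carried out in \cite{BGS96}, is to compare both complexes with the bar resolution and observe that Koszulity of $A$ is equivalent to the internal bigrading of $\mathrm{Ext}_A^\bullet(S,S)$ being concentrated on the diagonal; this diagonal concentration is precisely what forces the Koszul complex to coincide, up to isomorphism, with the minimal graded resolution of $S$ and therefore to be a resolution.
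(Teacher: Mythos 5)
The paper does not prove this statement; it is quoted from \cite{BGS96}, so there is no in-paper argument to compare yours against. Your sketch follows the standard proof from that reference and is essentially correct: the backward direction is immediate, and the containment $E_l\subseteq K_l$ for the generating spaces of a minimal linear resolution goes exactly as you describe (minimality gives $E_l\hookrightarrow V\otimes_S E_{l-1}$, while $d^2=0$ at the top two stages gives $E_l\subseteq M\otimes_S V^{\otimes (l-2)}$, whence $E_l\subseteq (V\otimes_S K_{l-1})\cap (M\otimes_S V^{\otimes (l-2)})=K_l$ by induction).

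One caution on your final step: producing a chain map from the Koszul complex to $P^\bullet$ via the comparison theorem and then declaring it an isomorphism ``by minimality'' is circular as literally stated, since minimality only detects isomorphisms between complexes already known to be resolutions --- which is what you are trying to prove about the Koszul complex. The standard repair is an induction on homological degree: assuming the truncated Koszul complex agrees with the truncated minimal resolution through stage $l-1$, Koszulity forces the $l$-th syzygy to be generated in degree $l$, and a direct computation identifies its degree-$l$ component with $(V\otimes_S K_{l-1})\cap (M\otimes_S V^{\otimes (l-2)})=K_l$; hence $E_l=K_l$ and the induction continues. This is exactly the content of your closing appeal to the diagonal concentration of $\Ext^{\bullet}_A(S,S)$, so the issue is one of packaging rather than substance.
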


In this case, the Koszul resolution is a subresolution of the \emph{Bar resolution} $\mathbb B(A,S)$ of $S$:
\begin{equation}
\label{bar_res}
	\mathbb B(A, S): = \cdots\to A\otimes A_+^{\otimes n}\to A\otimes A_+^{\otimes (n-1)}\to\cdots \to A,
\end{equation}
where $A_+:=\bigoplus_{i\geq 1}A_i$, with differentials given by
\begin{equation*}
\begin{split}
	1\otimes (a_1\otimes\cdots\otimes &a_n) \mapsto \\
	a_1\otimes &(a_2\otimes\cdots\otimes a_n)+\sum_{1\leq \ell\leq n-1} (-1)^{\ell} 1\otimes (a_1\otimes\cdots a_{\ell-1}\otimes a_{\ell}a_{\ell+1}\otimes a_{\ell+2}\otimes\cdots\otimes a_n).
\end{split}
\end{equation*}
Combining the discussion after Definition 2.8.1 with Theorem 2.10.1 in \cite{BGS96}, we deduce that
\begin{equation}
\label{tor_kl}
	K_{\ell} \cong \Tor^A_{\ell}(S,S)
\end{equation}
as $S$-bimodules. The Koszul resolution also gives rise to a projective $A$-bimodule resolution of $A$:
\[
	\cdots\to P_n\to P_{n-1}\to\cdots\to P_0\to A\to 0,
\]
where each $P_{\ell}$ is given by 
\[
	A\otimes \bigcap_{\mu} (V^{\otimes\mu}\otimes M \otimes V^{\otimes \ell-\mu-2})\otimes A\subset A\otimes V^{\otimes \ell} \otimes A
\]
and the differentials are the restrictions of the maps 
\[
	A\otimes V^{\otimes \ell}\otimes A\to A\otimes V^{\otimes (\ell-1)}\otimes A,
\]
given by
\[
	a\otimes v_1\otimes\cdots\otimes v_{\ell}\otimes a'\mapsto av_1\otimes v_2\otimes\cdots\otimes v_{\ell}\otimes a' + (-1)^{\ell} a\otimes v_1\otimes\cdots\otimes v_{\ell-1}\otimes v_{\ell} a'.
\]

\begin{remark}
\label{grading_type}
In this paper, we deal in general with two different gradings. The first one is the natural grading coming from the tensor algebra structure 
\[
	A = T_S V/\langle M \rangle.
\]
We consider Koszul algebras with respect to this grading and call it the \emph{Koszul grading}. If $A$ is Calabi--Yau of Gorenstein parameter $1$, then we usually have a second grading on $A$ giving this property. In particular, when $A$ is a preprojective algebra, this grading is given by the natural grading structure on the tensor algebra
\[
	A = T_{\Lambda} \Ext^n_{\Lambda}(D\Lambda, \Lambda).
\]
We call this grading the \emph{preprojective grading}. 
\end{remark}


\subsection{Superpotentials}

Let $A = T_S V/\langle M \rangle$ be a $k$-algebra, where $S$ is a semisimple finite-dimensional $k$-algebra, and $V$ and $M$ are $S$-bimodules. We describe the notion of a superpotential, following \cite{BSW10}. In this subsection, all tensor products are over $S$. \\

Let $\tau\in\mathfrak{S}_{\ell}$ be an element of the symmetric group acting on 
\[
	v_{\ell}\otimes v_{\ell-1}\otimes \cdots\otimes v_1\in V^{\otimes \ell}
\] 
by permuting the indices, that is, 
\begin{equation*}
\label{sym_action}
	\tau(v_{\ell}\otimes v_{\ell-1}\otimes \cdots\otimes v_1):= v_{\tau^{-1}(\ell)}\otimes v_{\tau^{-1}(\ell-1)}\otimes\cdots\otimes v_{\tau^{-1}(1)}.
\end{equation*}
This action extends linearly. In this paper, we shall use the notation
\begin{equation}
\label{sigma_n}
	\sigma_n := (1\,\, 2\,\,\cdots \,\,n-1\,\, n)\in \mathfrak{S}_{n}. 
\end{equation} 

\begin{definition}[{\cite[Section 2.2]{BSW10}}]
A \emph{superpotential} of degree $n$ is an element $\omega$ of degree $n$ in $T_S V$ that commutes with the $S$-action:
\[
	s\omega = \omega s\quad\forall s\in S,
\] 
and is \emph{super-cyclically symmetric}, that is, $\sigma_n(\omega) = (-1)^{n-1}\omega$.
\end{definition}

\begin{lemma}
\label{super_cycl}
An element $\omega$ of degree $n$ in $T_S V$ is super-cyclically symmetric if and only if it is given as a linear combination of elements of the form 
\[
	\sum_{i = 0}^{n-1} (-1)^{i(n-1)} \sigma_n^i(v_n\otimes\cdots\otimes v_1),
\]
for some $v_1, \ldots, v_n\in V$. 
\end{lemma}

\begin{proof}
We have that 
\begin{equation*}
\begin{split}
	\sigma_n\left(\sum_{i = 0}^{n-1} (-1)^{i(n-1)} \sigma_n^i(v_n\otimes\cdots\otimes v_1)\right) &= \sum_{i = 0}^{n-1} (-1)^{i(n-1)} \sigma_n^{i+1}(v_n\otimes\cdots\otimes v_1)\\
	& = (-1)^{n-1}\sum_{j = 1}^{n} (-1)^{j(n-1)} \sigma_n^{j}(v_n\otimes\cdots\otimes v_1),
\end{split}
\end{equation*}
so this element is super-cyclically symmetric. Conversely, if $v_n\otimes\cdots\otimes v_1$ is a summand of a super-cyclically symmetric element $\omega$, then $(-1)^{n-1}\sigma_n(v_n\otimes\cdots\otimes v_1)$ must also be. Repeating this, we see that 
\[
	\sum_{i = 0}^{n-1} (-1)^{i(n-1)} \sigma_n^i(v_n\otimes\cdots\otimes v_1)
\]
must be a summand of $\omega$. 
\end{proof}

Let $W_{n-\ell}$ be the $S$-bimodule generated by elements of the form $\delta_p \omega$, where $p\in V^{\otimes \ell}$ is a $k$-basis element and  $\delta_p: V^{\otimes n}\to V^{\otimes n-\ell}$ is a linear map defined on a $k$-basis element $v\in V^{\otimes n}$ as
\[
	\delta_p v : = \left\{
	\begin{array}{ll}
		r  & \mbox{if } v = p\otimes r, \\
		0 & \mbox{else. }
	\end{array}
\right.
\]

\begin{definition}[{\cite[Definition 2.1]{BSW10}}]
\label{def:der_quotient}
The \emph{derivation-quotient} algebra of $\omega$ of order $\ell$ is defined as
\[
	D(\omega,\ell):= T_S V/\langle W_{n-\ell}\rangle.
\]  
\end{definition}

Consider the complex 
\begin{equation}\label{eq:w}
	W_{\bullet}: \,\,0\to A\otimes W_n\otimes A\xrightarrow{d} A\otimes W_{n-1}\otimes A\xrightarrow{d}\cdots\xrightarrow{d}A\otimes W_1\otimes A\xrightarrow{d} A\otimes W_0\otimes A\to 0,
\end{equation}
where the differential $d: A\otimes W_{\ell}\otimes A\to A\otimes W_{\ell-1}\otimes A$ is given by 
\[
	d(a\otimes v_{\ell}\otimes\cdots\otimes v_1\otimes a') 	= \varepsilon_{\ell}(av_{\ell}\otimes v_{\ell-1}\otimes\cdots\otimes v_1\otimes a' + (-1)^{\ell} a\otimes v_{\ell}\otimes\cdots\otimes v_2\otimes v_1 a'),
\]
where 
\[
	\varepsilon_{\ell}:=
\left\{
	\begin{array}{ll}
		(-1)^{\ell(n-\ell)}  & \mbox{if } \ell< (n+1)/2 \\
		1 & \mbox{else.}
	\end{array}
\right.
\]
For any superpotential, this complex is self-dual, leading to a Calabi--Yau property. Moreover, it is a subcomplex of the Koszul complex for $D(\omega, n-2)$.

\begin{theorem}[{\cite[Theorem 6.2]{BSW10}}]
\label{BSW}
Let $A = T_S V/ \langle M \rangle$. Then $A$ is an $n$-Calabi--Yau Koszul algebra if and only if $A\cong D(\omega,n-2)$, for some superpotential $\omega$ of degree $n$, and $W_{\bullet}$ is exact in positive degree with $H^0(W_{\bullet}) = A$. In this case, $W_{\bullet}$ is the Koszul resolution of $A$ and is self-dual.  
\end{theorem}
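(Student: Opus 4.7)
My strategy is to play off two canonical projective bimodule complexes for $A$: the Koszul bimodule complex $K^{\mathrm{bim}}_\bullet$ with $K^{\mathrm{bim}}_l = A\otimes_S K_l \otimes_S A$, and the derivation complex $W_\bullet$. Each is determined by its top term ($K_n$ or $W_n$) together with combinatorial data (intersections or derivations), and the theorem amounts to showing these two descriptions coincide precisely when the right self-duality is present.

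For the backward direction, suppose $A\cong D(\omega, n-2)$ and $W_\bullet$ is exact in positive degree with $H^0(W_\bullet) = A$. I would first show $W_l = K_l$ for all $l$, which establishes Koszulity. By construction $W_2 = \delta_{V^{\otimes n-2}}\omega$ is the relation bimodule $M$, so the inclusion $W_l \subset K_l$ follows inductively from the form of the derivations. The reverse inclusion uses super-cyclic symmetry: elements of $K_l$ are simultaneously ``left'' and ``right'' partial derivations of $\omega$, and $\sigma$-invariance allows one to rewrite each as $\delta_p\omega$ for some $p\in V^{\otimes n-l}$. Exactness of $W_\bullet$ then identifies it with the Koszul bimodule resolution, so $A$ is Koszul. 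For the Calabi-Yau property, I would define a pairing $W_l\otimes_S W_{n-l}\to W_n \cong S$ by concatenation followed by projection onto the $S$-bimodule generator $\omega$; super-cyclic symmetry makes this pairing non-degenerate and compatible with the differentials up to the signs $\epsilon_l$, yielding the required self-duality $W_\bullet\cong W_\bullet^{\vee}[n]$.

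For the forward direction, assume $A$ is Koszul and bimodule $n$-Calabi-Yau. Then $K^{\mathrm{bim}}_\bullet$ is a minimal projective bimodule resolution of $A$, so the self-duality $P^\bullet\cong (P^\bullet)^\vee[n]$ must be realized on it. Using the adjunction $\Hom_{A^e}(A\otimes_S X\otimes_S A, A^e)\cong\Hom_{S^e}(X, S^e)$, the degree $0$ component of this isomorphism gives $K_n\cong K_0^{*} = S^{*}\cong S$ as $S$-bimodules. Define $\omega\in K_n \subset V^{\otimes n}$ as the image of $1\in S$ under this isomorphism. Compatibility of the self-duality with the first differential $A\otimes_S V\otimes_S A \to A\otimes_S A$ translates into the cyclic identity $\sigma(\omega) = \omega$, and compatibility in higher degrees identifies $K_l$ with $\delta_{V^{\otimes n-l}}\omega$. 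Hence $A\cong D(\omega, n-2)$, the complex $W_\bullet$ equals $K^{\mathrm{bim}}_\bullet$, and $\omega$ is the announced $S$-bimodule generator of $K_n$.

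The most delicate point, in both directions, is the interchange between the ``intersection'' and ``derivation'' descriptions of $K_l$. Super-cyclic symmetry of $\omega$ is precisely what allows one to freely move derivations from one side of the tensor to the other inside $V^{\otimes n}$; conversely, extracting this symmetry from an abstract Calabi-Yau self-duality requires carefully tracking how a single isomorphism of complexes in degree $0$ propagates to an identification of every intermediate $K_l$ with an image of a derivation map. I expect this bookkeeping of signs, adjunctions, and symmetries to be the principal technical burden.
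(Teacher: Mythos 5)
This theorem is imported verbatim from \cite{BSW10}; the paper gives no proof of it, so there is no internal argument to compare against. Measured against the actual proof in \cite{BSW10}, your overall strategy is the right one: identify the derivation complex $W_{\bullet}$ with the Koszul bimodule complex, get Koszulity from the fact that $W_{\bullet}$ is then a linear resolution, get the Calabi--Yau property from the concatenation pairing $W_l\otimes_S W_{n-l}\to W_n$, and in the converse direction extract $\omega$ as the generator of $K_n\cong \Hom_{S^e}(S,S^e)\cong S$ and read off super-cyclic symmetry from the self-duality of the minimal resolution. The forward direction as you sketch it is essentially the published argument, including the correct identification of where the real work lies (propagating one isomorphism in top degree to an identification of every $K_l$ with a space of derivations of $\omega$, with the signs $\epsilon_l$).

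There is, however, one concrete logical slip in your backward direction: you have the roles of the two hypotheses reversed when proving $W_l=K_l$. The inclusion $W_l\subseteq K_l$ is the one that uses super-cyclic symmetry --- a priori $\delta_p\omega$ only visibly lands in $M\otimes V^{\otimes l-2}$ (where $M=W_2$), and it is $\sigma$-invariance of $\omega$ that lets you slide the relation factor to every position $V^{\otimes\mu}\otimes M\otimes V^{\otimes l-\mu-2}$ and hence into the intersection $K_l$. The reverse inclusion $K_l\subseteq W_l$ does \emph{not} follow from $\sigma$-invariance: for a degenerate superpotential whose $W_{\bullet}$ fails to be exact, $W_l$ can be a proper subspace of $K_l$. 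What forces equality is the exactness hypothesis: once $W_{\bullet}$ is exact in positive degrees with $H^0=A$, it is a graded projective bimodule resolution with $l$-th term generated in degree $l$, so $A$ is Koszul and $W_l\cong\Tor_l^A(S,S)\cong K_l$; combined with the containment $W_l\subseteq K_l$ this gives equality. Since you do invoke exactness immediately afterwards, the argument is repairable by reordering, but as written the justification offered for $K_l\subseteq W_l$ would not stand on its own.
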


\begin{remark}
\label{sp_gen}
In particular, if  $A = T_S V/ \langle M \rangle$ is an $n$-Calabi--Yau Koszul algebra, then the superpotential $\omega$ is a $S$-bimodule generator of $K_n$. In fact, there is an isomorphism $W_n\cong K_n$, so the claim follows by definition of $W_n$. Similarly, all $K_{\ell}$ are generated as $S$-bimodules by elements of the form $\delta_p\omega$, where $p$ is an element of degree $\ell$ in $T_S V$.  
\end{remark}


\section{Quiver construction of preprojective algebras}

The goal of this section is to generalize the quiver construction of $2$-preprojective algebras to higher preprojective algebras over basic Koszul algebras. In the classical case, we have the following result.

\begin{theorem}[{\cite[Theorem A]{Rin98}; \cite[Theorem 3.1]{CB99}}]
Let $Q = (Q_0, Q_1)$ be a quiver with no oriented cycle and consider its path algebra $kQ$. For each arrow $a:i\to j\in Q_1$, define $a^*:j\to i$. Consider $\overline Q = (Q_0, \overline Q_1)$, where $\overline Q_1$ contains all the arrows $a$ of $Q_1$ as well as the new arrows $a^*$. Then, the preprojective algebra of $kQ$ is given by
\[
	\Pi(kQ) \cong k\overline Q/\langle\sum_{a\in Q_1} [a, a^*]\rangle,
\]
where $[a, a^*]=aa^*-a^*a$.
\end{theorem}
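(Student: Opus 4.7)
\emph{Proof plan.} The plan is to exhibit $B := k\tilde Q/\bigl(\sum_{a \in Q_1}[a, a^*]\bigr)$ as a bimodule $2$-Calabi-Yau algebra of Gorenstein parameter $1$ whose degree-zero part is $kQ$, and then invoke Theorem~\ref{structure} to identify $B$ with $\Pi(kQ)$. The Calabi-Yau property will be verified via the BSW superpotential framework of Theorem~\ref{BSW}.

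I equip $B$ with two compatible gradings. The \emph{Koszul grading} places every arrow of $\tilde Q$ in degree $1$, so that $B_0 = kQ_0 =: S$ is semisimple; this is the grading with respect to which Koszulity will be addressed. The \emph{preprojective grading} places $a \in Q_1$ in degree $0$ and $a^* \in Q_1^*$ in degree $1$, and is the one that will detect the Gorenstein parameter. Writing $V := k\tilde Q_1 = kQ_1 \oplus kQ_1^*$ as an $S$-bimodule placed in Koszul-degree $1$, I consider
\[
  \omega := \sum_{a \in Q_1}\bigl(a \otimes_S a^* - a^* \otimes_S a\bigr) \in V \otimes_S V.
\]
Since $aa^*$ is a loop at $s(a)$ while $a^*a$ is a loop at $t(a)$, one checks that $\omega$ commutes with $S$ and is super-cyclically symmetric (of degree $2$). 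Because $\sum_i e_i = 1$, the elements $\delta_{e_i}\omega = e_i \omega$ for $i \in Q_0$ generate the same two-sided ideal in $T_S V$ as $\omega$, and hence $B \cong D(\omega, 0)$.

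The main obstacle is verifying that the subcomplex $W_\bullet$ of the Koszul complex attached to $\omega$ is exact in positive degrees with $H^0(W_\bullet) = B$; this amounts to simultaneously establishing Koszulity of $B$ and that the relations $e_i\omega$ exhaust the defining relations. I would handle this either by constructing an explicit $k$-basis of $B$ via a normal form on paths of $\tilde Q$ and matching Hilbert series against those forced by exactness of $W_\bullet$, or by invoking the known Koszulity of preprojective algebras. Once exactness is granted, Theorem~\ref{BSW} provides a self-dual length-two projective bimodule resolution of $B$, yielding the bimodule $2$-Calabi-Yau property.

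Finally, each summand of $\omega$ contains exactly one arrow from $Q_1^*$, so $\omega$ is homogeneous of preprojective degree $1$; the self-duality then takes the form $P_\bullet \cong P_\bullet^\vee[2](-1)$, giving Gorenstein parameter $1$. The composite $kQ \hookrightarrow k\tilde Q \twoheadrightarrow B$ is retracted by the algebra map $B \to kQ$ sending $a \mapsto a$ and $a^* \mapsto 0$ (which kills $\sum_a[a, a^*]$), so $kQ$ embeds in $B$; since every relation lies in preprojective degree $\geq 1$, one obtains $B_0 = kQ$, which is finite-dimensional by the no-cycles hypothesis. Theorem~\ref{structure} then identifies $B$ with $\Pi(B_0) = \Pi(kQ)$, as desired.
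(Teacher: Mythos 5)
The paper itself offers no proof of this statement (it is quoted from Ringel and Crawley--Boevey), but the route it takes for the generalization in Section 3 --- computing $\Ext^1_{\Lambda^e}(\Lambda,\Lambda^e)\cong\Ext^1_{\Lambda}(D\Lambda,\Lambda)$ directly as the cokernel of the dualized differential in the bimodule resolution $0\to\bigoplus_{a:i\to j}\Lambda e_j\otimesk e_i\Lambda\to\Lambda\otimes_S\Lambda\to\Lambda\to 0$, which produces the arrows $a^*$ and the relations $e_i(\sum_a[a,a^*])e_i$ simultaneously (Theorem \ref{ext_construction} and Corollary \ref{construction} specialized to $n=1$) --- is the classical argument. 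Your route through Theorem \ref{structure} and the Calabi--Yau characterization is genuinely different, and it has two real gaps.

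First, it cannot cover the statement as written. The hypothesis is only that $Q$ has no oriented cycles, which includes Dynkin quivers; for those, $kQ$ is not $1$-representation-infinite, $\Pi(kQ)$ is finite-dimensional, and $B=k\tilde Q/(\sum_a[a,a^*])$ is \emph{not} bimodule $2$-Calabi-Yau (the complex $W_\bullet$ is not exact), so neither Theorem \ref{structure} nor Theorem \ref{BSW} is available. Your argument can at best treat the non-Dynkin case, and this restriction is nowhere acknowledged. Second, even for non-Dynkin $Q$, the step you flag as ``the main obstacle'' --- exactness of $W_\bullet$ in positive degrees, equivalently Koszulity plus the $2$-Calabi--Yau property of $B$ --- is where essentially all of the content of the theorem lives, and neither fallback you offer discharges it: ``invoking the known Koszulity of preprojective algebras'' is circular, since it presupposes that $B$ is the preprojective algebra (or structural facts about $B$ equivalent to what is being proved), while the normal-form/Hilbert-series computation is named but not carried out. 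The pieces you do verify (that $\omega$ generates the defining ideal, that $B_0=kQ$ via the retraction $a^*\mapsto 0$, and that $\omega$ is homogeneous of preprojective degree $1$) are correct but are the easy parts. The direct $\Ext^1$ computation avoids both difficulties and works uniformly.
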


In this section, we let $\Lambda = kQ/\langle M\rangle = T_S V /\langle M \rangle$ be a basic Koszul finite-dimensional algebra of global dimension $n\geq 1$, where $Q$ is a quiver, $S=kQ_0$ is a finite-dimensional semisimple $k$-algebra, $V=kQ_1$ a $S$-bimodule and $M\subset V\otimes_S V$. Let 
\[
	\Pi = T_{\Lambda} \Ext^n_{\Lambda} (D\Lambda, \Lambda) \cong T_{\Lambda} \Ext^n_{\Lambda^e}(\Lambda, \Lambda^e).
\]

The following is inspired by techniques employed in \cite{CB99}. By \cite[Section 1.5]{Hap89}, a minimal bimodule resolution of $\Lambda$ has the form 
\[
	0\to R_n\to R_{n-1}\to \ldots \to R_1\to R_0 \to \Lambda\to 0, 
\]
where 
\[
	R_{\ell} = \bigoplus_{i,j} (\Lambda e_j\otimes e_i\Lambda)^{\dim \Ext^{\ell}_{\Lambda}(S(i), S(j))}, 
\]
for $0\leq \ell \leq n$. Since $\Lambda$ is Koszul, we can invoke (\ref{tor_kl}) to rewrite its Koszul bimodule resolution as
\begin{equation}
\label{eq:koszul_complex}
0\to \bigoplus_{\substack{q: i\to j\\ q\in \mathcal B(K_n)}}\Lambda e_j\otimesk e_i\Lambda\xrightarrow{f_n}\bigoplus_{\substack{p: i\to j\\ p\in \mathcal B(K_{n-1})}}\Lambda e_j\otimesk e_i\Lambda\xrightarrow{f_{n-1}} \cdots \xrightarrow{f_1}\Lambda\otimes_S\Lambda\to\Lambda\to 0,
\end{equation}
where $K_{\ell}$ is the $S$-bimodule introduced in Definition \ref{koszul_complex}, for $1\leq \ell \leq n$, and the direct sums are indexed by a set $\mathcal B(K_{\ell})$ of $k$-basis elements.\\

From now on in this section, if $q = v_{\ell}\otimes v_{\ell-1}\otimes\cdots\otimes v_2\otimes v_1$ is a path from $i$ to $j$, then we let $\delta^{\mathcal L} (q):= v_{\ell-1}\otimes\cdots\otimes v_1$, which is a path from $i$ to some vertex $j'$, and also let $\delta^{\mathcal R}  (q):= v_{\ell}\otimes\cdots \otimes v_2$, which is a path from some vertex $ i'$ to $j$. Then $f_{\ell}$ is defined as follows: 
\[
	f_{\ell}((e_j\otimes e_i)_q) = (v_{\ell} e_{j'}\otimes e_i)_{\delta^{\mathcal L} (q)} + (-1)^{\ell}(e_j\otimes e_{i'} v_1)_{\delta^{\mathcal R}  (q)}.
\]
Here, we use the notation $(-\otimes -)_p$ to denote an element in the $p$-th component in $\bigoplus_p \Lambda e_i\otimes_k e_j \Lambda$. We extend this definition linearly. This corresponds to the usual differential in the Koszul resolution.\\

Applying $\Hom_{\Lambda^e}(-, \Lambda^e)$, we obtain a complex ending as follows: 
\[
	\Hom_{\Lambda^e}(\bigoplus_{\substack{p: i\to j\\ p\in \mathcal B(K_{n-1})}}\Lambda e_j\otimesk e_i\Lambda, \Lambda^e)\xrightarrow{\tilde f_n}\Hom_{\Lambda^e}(\bigoplus_{\substack{q: i\to j\\ q\in \mathcal B(K_n)}}\Lambda e_j\otimesk e_i\Lambda, \Lambda^e)\to 0,
\]
where $\tilde f_n(\phi) := \phi\circ f_n$. In general, if $\Omega$ is a $\Lambda$-bimodule, then 
\begin{align*}
	\kappa:\Hom_{\Lambda^e}(\Lambda e_j\otimesk e_i\Lambda, \Omega) &\cong e_j\Omega e_i\\
	\phi\quad\quad\quad\quad &\mapsto \phi(e_j\otimes e_i)\\
	(\phi_m: be_j\otimes e_i b'\mapsto be_jme_ib') &\mapsfrom e_j m e_i
\end{align*}

Thus, taking $\Omega = \Lambda^e$, we obtain an isomorphism 
\[
	g_{\ell}: \Hom_{\Lambda^e}(\bigoplus_{\substack{p: i\to j\\ p\in \mathcal B(K_{\ell})}}\Lambda e_j\otimesk e_i\Lambda, \Lambda^e)\overset{\kappa_{\ell}}{\cong} \bigoplus_{\substack{p: i\to j\\ p\in \mathcal B(K_{\ell})}}e_j\Lambda\otimesk\Lambda e_i\overset{\rho_{\ell}}{\cong} \bigoplus_{\substack{p: i\to j\\ p\in \mathcal B(K_{\ell})}}\Lambda e_i\otimesk e_j\Lambda,
\]
where $\rho_{\ell}$ is the natural isomorphism that swaps the order of the terms in the tensor product. \\

Therefore, 
\[
	\Ext_{\Lambda^e}^n(\Lambda, \Lambda^e)\cong \bigoplus_{\substack{q: i\to j \\ q\in \mathcal B(K_n)}} \Lambda e_i\otimesk e_j \Lambda/ \Image (g_n\circ\tilde f_n).
\]

We now describe $ \Image(g_n\circ\tilde f_n)$. Let $q: i\to j$ be a $k$-basis element in $K_n\subset V^{\otimesS n}$, say $q = v_n\otimes \cdots\otimes v_1$. Then 
\begin{align*}
	f_n((e_j\otimes e_i)_q) &= (v_ne_{j'}\otimes e_i)_{\delta^{\mathcal L} (q)} + (-1)^n (e_j\otimes e_{i'}v_1)_{\delta^{\mathcal R} (q)}\\
	&= (e_j v_n \otimes e_i)_{\delta^{\mathcal L} (q)} + (-1)^n (e_j\otimes v_1 e_i)_{\delta^{\mathcal R} (q)}.\\
\end{align*}

Note that the space 
\[	
	 \Hom_{\Lambda^e}(\bigoplus_{\substack{p: i\to j\\ p\in \mathcal B(K_{n-1})}}\Lambda e_j\otimesk e_i\Lambda, \Lambda^e)
\]
is generated as a $\Lambda$-bimodule by morphisms of the form 
\[
	\phi_p: (e_j\otimes e_i)_{p'}\mapsto
		\begin{cases}
			 e_j\otimes e_i  &\quad\text{if } p = p',\\
			 0 & \quad \text{else}.
		\end{cases}
\]
for $p, p'\in \mathcal B(K_{n-1})$. Thus, 
\begin{align*}
	\kappa_n(\tilde f_n(\phi_p))((e_j\otimes e_i)_q) &= \phi_p\circ f_n((e_j\otimes e_i)_q)\\
	& = \begin{cases}
		(e_jv_n\otimes e_i)_{q} &\quad\text{if } p = \delta^{\mathcal L} (q),\\
		(-1)^n(e_j\otimes v_1e_i)_{q}&\quad\text{if } p = \delta^{\mathcal R} (q),\\
		0 &\quad\text{else}.\\
	\end{cases}
\end{align*}
Finally, applying $\rho_n$, we have  
\begin{equation}
\label{eq:diff_f}
	g_n(\tilde f_n(\phi_p))((e_j\otimes e_i)_q)  = \begin{cases}
		(e_i\otimes e_jv_n)_{q} &\quad\text{if } p = \delta^{\mathcal L} (q),\\
		(-1)^n(v_1e_i\otimes e_j)_{q}&\quad\text{if } p = \delta^{\mathcal R} (q),\\
		0 &\quad\text{else}.\\
	\end{cases}
\end{equation}
If $p\in V^{\ell}$, for some $0\leq \ell \leq n$, we define the following linear morphisms on elements $q\in \mathcal B(V^{n})$ as 
\[
	\delta^{\mathcal L}_p (q) := \left\{
	\begin{array}{ll}
		a  & \mbox{if } q = p\otimes a \\
		0 & \mbox{else} ,
	\end{array}
\right.
\]
and 
\[
	\delta^{\mathcal R}_p (q) := \left\{
	\begin{array}{ll}
		a  & \mbox{if } q = a\otimes p \\
		0 & \mbox{else}.
	\end{array}
\right.
\]
We conclude that $ \Image (g_n\circ\tilde f_n)$ is generated as a $\Lambda$-bimodule by the following set in ${	\bigoplus_{\substack{q: i\to j \\ q\in K_n}} \Lambda e_i\otimesk e_j \Lambda}$:
\[
      I:= \left\{ \sum_{\substack{q:i\to j \\ q\in \mathcal B(K_n)}} (e_i\otimes e_j \delta^{\mathcal R}_p (q))_{q} + (-1)^n\sum_{\substack{q:i\to j \\ q\in \mathcal B(K_n)}}  (\delta^{\mathcal L}_p (q) e_i\otimes e_j)_{q}\quad | \quad p\in \mathcal B(K_{n-1}) \right\}.
\]
We thus proved the following theorem.

\begin{theorem}
\label{ext_construction}
With the setting above, there is an isomorphism of $\Lambda$-bimodules
\[
	\Ext_{\Lambda^e}^n(\Lambda, \Lambda^e)\cong \bigoplus_{\substack{q: i\to j \\ q\in \mathcal B(K_n)}} \Lambda e_i\otimesk e_j \Lambda/ \langle I \rangle.
\]
\end{theorem}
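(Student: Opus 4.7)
The plan is to compute $\Ext_{\Lambda^e}^n(\Lambda,\Lambda^e)$ directly from the Koszul bimodule resolution of $\Lambda$ recalled above from \cite{Hap89}. Since that resolution has length $n$, applying the contravariant functor $\Hom_{\Lambda^e}(-,\Lambda^e)$ and taking cohomology in top degree reduces the problem to identifying the cokernel of the induced map $\tilde f$ between the $(n-1)$-st and $n$-th terms of the dualized complex.

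First I would apply the natural bimodule adjunction $\Hom_{\Lambda^e}(\Lambda e_j\otimesk e_i\Lambda,\Omega)\cong e_j\Omega e_i$ with $\Omega=\Lambda^e=\Lambda\otimesk \Lambda^{op}$, which identifies each summand of the dualized complex indexed by $p:i\to j$ in $K_l$ with $\Lambda e_i\otimesk e_j\Lambda$. Under this adjunction the source and target idempotents are swapped, which is the algebraic shadow of each Koszul relation $q\in K_n$ producing an ``inverse arrow'' from $j$ to $i$ in the preprojective quiver. This identification yields
\[
\Ext_{\Lambda^e}^n(\Lambda,\Lambda^e)\cong \bigoplus_{\substack{q:i\to j\\ q\in K_n}}\Lambda e_i\otimesk e_j\Lambda\,\Big/\,\mbox{Im}\,\tilde f,
\]
so only the description of $\mbox{Im}\,\tilde f$ remains.

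Next I would trace the Koszul differential $f$ through the adjunction. Given a generator $(e_i\otimes e_j)_p$ with $p\in K_{n-1}$, the corresponding $\Lambda^e$-linear functional $\phi_p$ is the one picking off the $p$-component of the $(n-1)$-st term. For a Koszul relation $q=\sum_m k_m v_n^m\otimes\cdots\otimes v_1^m\in K_n$, evaluating $\phi_p\circ f$ on $(e_j\otimes e_i)_q$ uses the explicit formula for $f$ to collect exactly those summands whose truncations $q'_m$ or $q''_m$ equal $p$, producing respectively the terms $k_m(e_i\otimes e_j v_n^m)_{q'_m}$ and $(-1)^n k_m(v_1^m e_i\otimes e_j)_{q''_m}$. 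Summing these contributions as $p$ ranges over $K_{n-1}$ recovers the generators of $I$ displayed in the theorem. The main technical obstacle is the careful bookkeeping of the $S$-bimodule twists and of the sign $(-1)^n$ from the Koszul differential: the adjunction moves a tensor factor from one side to the other, so one must verify that $v_n^m$ lands on the right of the tensor and $v_1^m$ on the left, in order for the computed image to match the two types of generators appearing in $I$.
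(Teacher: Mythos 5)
Your proposal is correct and follows essentially the same route as the paper: dualize the Koszul bimodule resolution with $\Hom_{\Lambda^e}(-,\Lambda^e)$, use the adjunction $\Hom_{\Lambda^e}(\Lambda e_j\otimesk e_i\Lambda,\Omega)\cong e_j\Omega e_i$ to identify the top two terms, and compute $\mbox{Im}\,\tilde f$ by evaluating $\phi_p\circ f$ on the generators indexed by $K_n$. The bookkeeping of the idempotent swap and the sign $(-1)^n$ that you flag is exactly what the paper's calculation carries out.
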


We can now describe explicitely the quiver of the preprojective algebra over a basic Koszul $n$-representation-infinite algebra. 

\begin{corollary}
\label{construction}
Assume that $\Pi$ and $\Lambda$ are as above. Let $\overline Q$ be the quiver obtained by adding an arrow $a_q: j\to i$ to the quiver $Q$ of $\Lambda$ for each $k$-basis element $q: i\to j\in \mathcal B(K_n)$. Let $\overline M$ be the union of $M$ with the set $\tilde M$ of quadratic relations given by
\[
	\tilde M:=  \left\{\sum_{q\in \mathcal B(K_n)}a_q \delta^{\mathcal R}_p (q)+ (-1)^n \sum_{q\in \mathcal B(K_n)}\delta_p^{\mathcal L} (q) a_q \quad | \quad p\in \mathcal B(K_{n-1}) \right\}.
\] 
There is an isomorphism of algebras
\[
	\Pi\cong k\overline Q/\langle \overline M\rangle.
\]

\end{corollary}

\begin{proof}
We first consider the morphism
\[
	\varphi: T_{\Lambda}  \left(\bigoplus_{\substack{q: i\to j \\ q\in \mathcal B(K_n)}} \Lambda e_i\otimesk e_j\Lambda\right)\rightarrow k\overline Q/\langle M\rangle
\]
by setting $\varphi(\lambda) := \lambda$ if $\lambda\in\Lambda = kQ/\langle M\rangle$ and $\varphi((e_i\otimes e_j)_q) := a_q$. Note that the codomain only has the relations $\langle M \rangle$ coming from $\Lambda$. This extends naturally to an algebra morphism by defining
\[
	\varphi((\lambda_1 e_i\otimes e_j \lambda_1')_{q_1}\otimes_{\Lambda} (\lambda_2 e_{\ell}\otimes e_m \lambda_2')_{q_2}) := \lambda_1 a_{q_1} \lambda_1'\lambda_2 a_{q_2}\lambda_2', 
\] 
and we also extend linearly. This is an isomorphism. In fact, it is injective since $\langle M\rangle \subset\Lambda$, and the map is an isomorphism on $\Lambda$. It is surjective because every arrow in $\overline Q_1$ has a preimage. \\

Applying $\varphi$ to the set $I$, we see that 
\[
      \varphi(I) = \tilde M. 
\]
Therefore, 
\[
	T_{\Lambda}  \left(\bigoplus_{\substack{q: i\to j \\ q\in K_n}} \Lambda e_i\otimesk e_j \Lambda/ \langle I \rangle\right)\cong k\overline Q/\langle \overline M\rangle.
\]
By Theorem \ref{ext_construction}, they are isomorphic to $\Pi$. 
\end{proof}

\begin{example}
Let $\Lambda$ be the $2$-Beilinson algebra, that is, the Koszul $2$-representation-infinite algebra given by the path algebra of the following quiver and relations: 
\begin{center}
\begin{tikzpicture}
 \tikzstyle{every node}=[draw,circle,fill=black,minimum size=3pt,
                            inner sep=0pt]
       \draw [decoration={markings,mark=at position 1 with {\arrow[scale=2]{>}}},
    postaction={decorate}, shorten >=0.4pt] (0,0) -- (1,0);  
    \draw [decoration={markings,mark=at position 1 with {\arrow[scale=2]{>}}},
    postaction={decorate}, shorten >=0.4pt] (0,0.5) -- (1,0.5);  
      \draw [decoration={markings,mark=at position 1 with {\arrow[scale=2]{>}}},
    postaction={decorate}, shorten >=0.4pt] (0,-0.5) -- (1,-0.5);
          \draw [decoration={markings,mark=at position 1 with {\arrow[scale=2]{>}}},
    postaction={decorate}, shorten >=0.4pt] (1.5,0) -- (2.5,0);  
    \draw [decoration={markings,mark=at position 1 with {\arrow[scale=2]{>}}},
    postaction={decorate}, shorten >=0.4pt] (1.5,0.5) -- (2.5,0.5);  
          \draw [decoration={markings,mark=at position 1 with {\arrow[scale=2]{>}}},
    postaction={decorate}, shorten >=0.4pt] (1.5,-0.5) -- (2.5,-0.5);
	 \tikzstyle{every node}=[draw, circle, fill = white, draw = none, minimum size=1pt,
                            inner sep=0pt]      
       \draw (0.5,0.75) node {$a$};
       \draw (0.5,0.25) node {$b$};
       \draw (0.5, -0.25) node {$c$}; 
       \draw (2, 0.75) node {$d$};
       \draw (2, 0.25) node {$e$ };
       \draw (2, -0.25) node {$f$};
       \draw (-0.25, 0) node{$1$};
     \draw (1.25, 0) node{$2$};
     \draw(2.75, 0) node{$3$};
\end{tikzpicture}
\end{center}
\[
	q_1:= db-ea =0,\quad q_2:=fa-dc = 0,\quad q_3:=ec-fb = 0.
\]
The module $K_2$ is generated by the relations, thus the preprojective algebra $\Pi(\Lambda)$ has three more arrows $a_{q_i}: 3\to 1$, $i=1,2,3$. Moreover, the arrows form a $k$-basis in $K_1$. Let $p = a$. Then
\[
	\sum_{q\in \mathcal B(K_2)}a_q \delta^{\mathcal R}_a (q)+ (-1)^2 \sum_{q\in \mathcal B(K_2)}\delta_a^{\mathcal L} (q) a_q = a_{q_1}(-e) + a_{q_2}f + a_{q_3}\cdot 0 + 0\cdot a_{q_1} + 0\cdot a_{q_2} + 0\cdot a_{q_3},  
\]
so $a_{q_2} f - a_{q_1}e=0$ is a new relation in $\Pi$.  Doing so for every arrow, we obtain that the new relations are given by 
\[
	a_{q_2}f- a_{q_1}e= 0, \quad  a_{q_1}d - a_{q_3}f = 0, \quad a_{q_3}e - a_{q_2}d = 0\quad
\]
and
\[
	ba_{q_1} - fa_{q_3} =0,\quad ca_{q_3} - aa_{q_1} = 0,\quad aa_{q_2} - ba_{q_3} =0. 
\]
The quiver of the preprojective algebra is given by

\begin{center}
\begin{tikzpicture}
 \tikzstyle{every node}=[draw,circle,fill=black,minimum size=5pt,
                            inner sep=0pt]
       \draw [decoration={markings,mark=at position 1 with {\arrow[scale=2]{>}}},
    postaction={decorate}, shorten >=0.4pt] (0,0) -- (1,0);  
    \draw [decoration={markings,mark=at position 1 with {\arrow[scale=2]{>}}},
    postaction={decorate}, shorten >=0.4pt] (0,0.5) -- (1,0.5);  
      \draw [decoration={markings,mark=at position 1 with {\arrow[scale=2]{>}}},
    postaction={decorate}, shorten >=0.4pt] (0,-0.5) -- (1,-0.5);
          \draw [decoration={markings,mark=at position 1 with {\arrow[scale=2]{>}}},
    postaction={decorate}, shorten >=0.4pt] (1.5,0) -- (2.5,0);  
    \draw [decoration={markings,mark=at position 1 with {\arrow[scale=2]{>}}},
    postaction={decorate}, shorten >=0.4pt] (1.5,0.5) -- (2.5,0.5);  
          \draw [decoration={markings,mark=at position 1 with {\arrow[scale=2]{>}}},
    postaction={decorate}, shorten >=0.4pt] (1.5,-0.5) -- (2.5,-0.5);
       \draw [->, very thick] (3,0.25) to [out=90, in =90] (-0.5,0.25);
      \draw [->, very thick] (3.6,0.25)  to [out=90,in= 90] (-1.1,0.25);
       \draw [->, very thick] (4.2,0.25)  to [out=90,in= 90] (-1.7,0.25);
	 \tikzstyle{every node}=[draw, circle, fill = white, draw = none, minimum size=1pt,
                            inner sep=0pt]      
       \draw (0.5,0.75) node {$a$};
       \draw (0.5,0.25) node {$b$};
       \draw (0.5, -0.25) node {$c$}; 
       \draw (2, 0.75) node {$d$};
       \draw (2, 0.25) node {$e$ };
       \draw (2, -0.25) node {$f$};
     \draw (-1.1, 0) node{$1$};
     \draw (1.25, 0) node{$2$};
     \draw(3.6, 0) node{$3$};
      \tikzstyle{every node}=[draw, circle, fill = white, draw = none, minimum size=1pt,
                            inner sep=0pt]    
       \draw (0.75, 1.17) node{$a_{q_1}$};
       \draw (1.5, 1.6) node{$a_{q_2}$};
       \draw (2.25, 1.9) node{$a_{q_3}$};

\end{tikzpicture}
\end{center}
We note that the new relations in the preprojective algebra $\Pi$ are quadratic relations. 

\end{example} 

Before stating the next theorem, we need the following lemma, which was established in the proof of \cite[Lemma 3.8]{AIR15}.

\begin{lemma}[{\cite{AIR15}}]
\label{gen}
Let $\Pi$ be a higher $(n+1)$-preprojective algebra. Let $P_{\bullet}$ be the minimal bimodule resolution of $\Pi$. Then each term $P_i$ is generated in degree $0$ or in degree $1$. Moreover, $P_0$ is generated in degree $0$ and $P_{n+1}$ is generated in degree $1$. 
\end{lemma}

\begin{proof}
By Remark \ref{rem:locally_finite}, the minimal bimodule resolution $P_{\bullet}$ of $\Pi$ has the self-duality property (\ref{self_dual}). Since it is minimal and $\Pi$ is positively graded by Definition \ref{def_bim_cy}, each $P_i$ is generated in non-negative degrees. Consider the isomorphism 
\[
	P_\bullet \cong P_\bullet^\vee[n+1](-1).
\]
If $P_i$ has a generator in degree $a$, then $P_{n+1-i}$ has a generator in degree $1-a$. Therefore, $1-a\geq 0$, which implies that $a = 0$ or $a=1$. Thus, each $P_i$ is generated in degree $0$ or $1$. Moreover, $\Pi$ is generated in degree $0$ as a bimodule over itself, so $P_0$ is generated in degree $0$, and thus $P_{n+1}$ is generated in degree $1$.
\end{proof}

\begin{theorem}
\label{thm:der_quotient_preproj}
Let $\Lambda = T_S V/\langle M\rangle$ be a basic $n$-representation-infinite algebra and $\Pi$ be the preprojective algebra over $\Lambda$. If $\Pi$ is Koszul, then $\Lambda$ is Koszul. As a partial converse, if $\Lambda= T_S V/\langle M\rangle$ is a basic finite-dimensional Koszul algebra of global dimension $n$, then 
\[
	\omega := \sum_{q\in \mathcal B(K_n)}\sum_{i=0}^n (-1)^{i\cdot n}\sigma_{n+1}^{i}(qa_q)
\]
is a superpotential of order $n+1$ in $k\overline Q$, where $\sigma_{n+1}$ is defined together with its action in (\ref{sigma_n}). Moreover, if, in addition, $n\leq 2$ or $\Lambda$ is $n$-representation-infinite, then there is an isomorphism of algebras
\[
	\Pi:=T_{\Lambda} \Ext^n_{\Lambda} (D\Lambda, \Lambda) \cong D(\omega, (n+1)-2).
\]
\end{theorem}

\begin{proof}
Suppose that $\Pi$ is Koszul. Consider its Koszul resolution 
\[
	0\to\Pi\otimes_S \tilde K_{n+1}\otimes_S\Pi \to\Pi\otimes_S\tilde K_n\otimes_S\Pi \cdots\to \Pi\otimes_S\Pi\to\Pi\to 0.
\]
Taking the degree $0$ part of this resolution with respect to the preprojective grading, defined in Remark \ref{grading_type}, we get a complex
\[
	0\to\Lambda\otimes_S K_n\otimes_S\Lambda\to\cdots\to \Lambda\otimes_S\Lambda\to \Lambda\to 0,
\]
where $K_{\ell} := (\tilde K_{\ell})_0$. Since $\tilde K_{n+1}$ is generated in degree $1$ by Lemma \ref{gen}, we have $K_{n+1} = 0$.  The Koszul resolution of $\Pi$ is exact in each degree separately, and thus, in particular in degree $0$. Therefore, the latter complex is exact and is the Koszul resolution of $\Lambda$. \\

Now, if $\Lambda$ is Koszul, then $\omega$ is a superpotential of order $n+1$. In fact, the choice of the arrows $a_q$ gives that $\omega$ commutes with the action of $S$. Moreover, it is super-cyclically symmetric by Lemma \ref{super_cycl}. \\

By Corollary \ref{construction}, and since $K_n = \bigcap_{\mu} (V^{\otimes\mu}\otimes M \otimes V^{\otimes n-\mu-2})$, we have that $ W_2 \subset  \overline M$, where $W_2$ is the $S$-bimodule generated by the elements $\delta_p\omega$, for $p\in \mathcal B(V^{\otimes n-1})$, introduced in Definition \ref{def:der_quotient}. Moreover, it is easy to see that the new relations $\tilde M$ in $\Pi$ satisfy $\tilde M\subset  W_2$. We thus need to show that $ M\subset W_2$ in order to conclude that 
\[
	\Pi\cong k\overline Q/\langle \overline M\rangle = k\overline Q/ \langle W_2\rangle= D(\omega, (n+1)-2).
\]
If $\Lambda$ is hereditary, this is trivial as $\Lambda$ has no relation. If $\mathsf {gl.dim}\, \Lambda = 2$, then 
\[
	\omega = \sum_{q\in \mathcal B(M)}\sum_{i=0}^2 \sigma_{3}^{i}(qa_q)
\]  
since $ K_2 =  M $, so $ M  \subset  W_2$ clearly. Now assume that $\Lambda$ is $n$-representation-infinite, where $n\geq 3$. By \cite[Lemma 5.2b)]{Iya11} and \cite[Theorem 3.4]{HIO14}, applying $\Hom_{\Lambda^e}(-, \Lambda^e)$ to the complex (\ref{eq:koszul_complex}) and using the isomorphisms $g_{\ell}$ yield a complex 
\[
	0\to \bigoplus_{\substack{e_i\in \mathcal B(K_0)}}\Lambda e_i\otimesk e_i\Lambda\xrightarrow{\alpha_1} \bigoplus_{\substack{p: i\to j\\ p\in \mathcal B(K_1)}}\Lambda e_i\otimesk e_j\Lambda\xrightarrow{ \alpha_2}\cdots\xrightarrow{ \alpha_n}\bigoplus_{\substack{p: i\to j\\ p\in \mathcal B(K_{n})}}\Lambda e_i\otimesk e_j\Lambda\to 0
\] 
which is exact in degrees $1, 2, \ldots, n-1$. Here $\alpha_{\ell} := g_{\ell}\circ \tilde f_{\ell}\circ g_{\ell-1}^{-1}$. For $1\leq \ell \leq n$, we define  
\[
	\delta(K_{\ell}):= \{p\in K_{\ell-1}\,\, |\,\, p = \delta^{\mathcal L}(q)\text{ or } p = \delta^{\mathcal R} (q)\text{ for some } q\in K_{\ell}\}.
\]
We show that $K_{\ell}= \delta(K_{\ell+1})$ for all $2\leq \ell \leq n-1$. In particular, we deduce $ M =  K_2 =  \delta^{n-2} (K_n) \subset  W_2 $, which completes the proof. We have that $\Image \alpha_{\ell}$ is generated by
\[
	 \left\{ \sum_{\substack{q\in \mathcal B(K_{\ell})}} (e_i\otimes e_j \delta^{\mathcal R}_p (q))_{q} + (-1)^\ell\sum_{\substack{q\in \mathcal B(K_\ell)}}  (\delta^{\mathcal L}_p (q) e_i\otimes e_j)_{q}\quad | \quad p\in \mathcal B(K_{\ell-1}) \right\}.
\]
Suppose by contradiction that there exists $\tilde q\in \mathcal B(K_{\ell})$ such that $\tilde q\not\in \delta (K_{\ell+1})$. Then $\tilde q\not = \delta^{\mathcal L} (q)$ and $\tilde q\not = \delta^{\mathcal R} (q)$ for any $q\in K_{\ell+1}$. Thus, $\alpha_{\ell+1}((e_i\otimes e_j)_{\tilde q}) = 0$, similarly to (\ref{eq:diff_f}). Therefore, $(e_i\otimes e_j)_{\tilde q}\in \ker \alpha_{\ell+1}$, but $(e_i\otimes e_j)_{\tilde q}\not\in \Image \alpha_\ell$, which contradicts the exactness of the complex in degree $\ell$. 
\end{proof}

\begin{remark}
This theorem generalizes the result \cite[Theorem 6.10]{Kel11}, where the author proves that a $3$-preprojective algebra is a Jacobian algebra given by a potential. Moreover, this also generalizes partially the main theorem in \cite[Theorem 3.4.2]{EE07}, which states that $2$-preprojective algebras over representation-infinite algebras are Koszul, and thus are described by derivation-quotient algebras. In \cite[Theorem 4.9]{GI19}, the authors showed that $\Pi(\Lambda)$ is Koszul if $\Lambda$ is an $n$-representation-infinite Koszul algebra.
\end{remark}


\section{Tensor product of preprojective algebras}

In this section, we study the preprojective grading structure on the tensor product of two bimodule Calabi--Yau algebras. When considering the natural tensor product grading on two graded $k$-algebras $A^1$ and $A^2$, given by 
\[
	(A^1\otimesk A^2)_i := \bigoplus_{\ell+m = i} A^1_{\ell}\otimesk A^2_m,
\]
we get the following proposition.

\begin{proposition}
\label{tensor_product_natural_grading}
If $A^i$ is an $n_i$-Calabi--Yau algebra of Gorenstein parameter $a_i$, for $i = 1, 2$, then $A:=A^1\otimesk A^2$, along with the natural tensor product grading, is $(n_1+n_2)$-Calabi--Yau of Gorenstein parameter $(a_1 + a_2)$. 
\end{proposition}

\begin{proof}
Since $A^i$ is an $n_i$-Calabi--Yau algebra of Gorenstein parameter $a_i$, $i=1,2$, we have by definition that $A^i\in\mathsf{per}\, (A^i)^e$ for $i=1,2$, that is, $A^i$ is quasi-isomorphic to a bounded complex $Q^i_{\bullet}$ of finite projective $(A^i)^e$-modules. Since $k$ is a field, the total complex $\Tot(Q_{\bullet}^1\otimesk Q_{\bullet}^2)$ is a bounded complex of finite projective $(A^1)^e\otimes (A^2)^e$-modules, which is quasi-isomorphic to $A^1\otimesk A^2$. Thus, $A\in\mathsf{per}\, (A^1\otimes_k A^2)^e \subset  \DD(\mathsf{Mod}\, (A^1\otimesk A^2)^e)$.\\

Moreover, there exists a graded bimodule projective resolution $P^i_{\bullet}$ of $A^i$ and an isomorphism  
\[
	P^i_{\bullet}\cong \Hom_{(A^i)^e}(P^i_{\bullet}, (A^i)^e)[n_i](-a_i)\quad\text{in } \mathsf C^b(\mathsf{grproj}\,(A^i)^e).
\]
We need to show the existence of an analogous isomorphism for a projective bimodule resolution $P_{\bullet}$ of $A$ in ${C^b(\mathsf{grproj}\, A^e)}$.\\

Again, since $k$ is a field, the total complex $P_{\bullet}:=\Tot(P_{\bullet}^1\otimesk P_{\bullet}^2)$ is a bimodule projective resolution of $A$. We claim that $P_{\bullet}$ has the desired self-duality property. In fact, 
\begin{align*}
\Tot(P_{\bullet}^1\otimesk P_{\bullet}^2)&\cong \Tot\left(\Hom_{(A^1)^e}(P_{\bullet}^1,(A^1)^e)[n_1](-a_1)\otimesk\Hom_{(A^2)^e}(P_{\bullet}^2, (A^2)^e)[n_2](-a_2)\right)\\
&\cong \Tot\left(\Hom_{(A^1\otimesk A^2)^e}(P_{\bullet}^1\otimesk P_{\bullet}^2, (A^1\otimesk A^2)^e)\right)[n_1+n_2](-a_1-a_2)\\
& \cong \Hom_{(A^1\otimesk A^2)^e}\left(\Tot(P_{\bullet}^1\otimesk P_{\bullet}^2), (A^1\otimesk A^2)^e\right)[n_1+n_2](-a_1-a_2).
\end{align*}

To see that the second isomorphism holds, note that the morphism
\begin{align*}
	\Hom_{(A^1)^e}(P^1_i, (A^1)^e)\otimesk\Hom_{(A^2)^e}(P^2_j, (A^2)^e)&\to \Hom_{(A^1)^e\otimesk (A^2)^e}(P_i^1\otimesk P_j^2, (A^1)^e\otimesk (A^2)^e)\\
	 f\otimes g\qquad\qquad\qquad\qquad\,\,\,\,\,\,&\mapsto\qquad\qquad\,\, (p\otimes q\mapsto f(p)\otimes g(q)),
\end{align*}
is an isomorphism. In fact, if $P^1_i= (A^1)^e$ and $P^2_j = (A^2)^e$, then the map factors through the natural isomorphisms 
\begin{align*}
	\Hom_{(A^1)^e}(P^1_i, (A^1)^e)\otimesk\Hom_{(A^2)^e}(P_2^j, (A^2)^e)&\cong (A^1)^e\otimesk (A^2)^e\\
	 &\cong \Hom_{(A^1)^e\otimesk (A^2)^e}(P_i^1\otimesk P_j^2, (A^1)^e\otimesk (A^2)^e).
\end{align*}

The case where $P_i^1$ and $P_j^2$ are finitely generated projective modules then follows. These morphisms commute with the differentials. 
\end{proof}

The main interest of this section is to determine whether we can endow the tensor product of two bimodule Calabi--Yau algebras with the grading structure of a preprojective algebra. Throughout, we let $A^i$ be a bimodule $n_i$-Calabi--Yau algebra, such that $n_i\geq 1$, for $i=1,2$. Since the tensor product of two Calabi--Yau algebras is always Calabi--Yau, it remains to determine if there exists a grading giving it Gorenstein parameter $1$ in such a way that the degree $0$ part is finite-dimensional. By the previous proposition, this grading cannot be the standard tensor product grading. If $A^1$ or $A^2$ has Gorenstein parameter $1$, then we can endow the tensor product with a grading giving it Gorenstein parameter $1$ as follows.  

\begin{lemma}
\label{trivial_grading}
If $A^1$ or $A^2$ admits a grading giving it Gorenstein parameter $1$, then so does the tensor product $A^1\otimesk A^2$. 
\end{lemma}

\begin{proof}
Suppose that $A^1$ has Gorenstein parameter $1$. We put $A^2$ in degree $0$ and keep the grading on $A^1$. Then the Gorenstein parameter of $A^1\otimesk A^2$ is the same as the Gorenstein parameter of $A^1$, which is $1$. 
\end{proof}

The degree $0$ part of the grading that we put on $A^1\otimesk A^2$ in Lemma \ref{trivial_grading} is of the form $\Lambda_1\otimesk \Lambda_2$, where $\Lambda_1 = A^1_0$ and $\Lambda_2 = A^2$ is $n_2$-Calabi--Yau. Note that $\Lambda_2$ is then infinite-dimensional by Remark \ref{rem:locally_finite}. However, we want to put a grading in such a way that the degree $0$ part is finite-dimensional, as this is an essential property in the definition of higher representation-infinite algebras. We show that this is impossible if $A^1$ and $A^2$ are Koszul. More precisely, we prove that the only possible grading giving $A^1\otimesk A^2$ Gorenstein parameter $1$ is the one described in the proof of Lemma \ref{trivial_grading}, with the grading induced either from the first or the second factor. This implies in particular that $A^1$ or $A^2$ must also admit a grading giving it Gorenstein parameter $1$. 

\begin{question}
The algebra $\Lambda_1\otimesk \Lambda_2$, while being infinite-dimensional, has finite global dimension and is the degree $0$ part of a bimodule Calabi--Yau algebra of Gorenstein parameter $1$. This raises the question whether we can extend the study of $n$-representation-infinite algebras to algebras of infinite dimension, and generalize the notion of preprojective algebras to bimodule Calabi--Yau algebras of Gorenstein parameter $1$ such that the degree $0$ part is not necessarily finite-dimensional. 
\end{question}

In \cite{HIO14}, the authors study the tensor product of two higher representation-infinite algebras. 

\begin{theorem}[{\cite[Theorem 2.10 \& Lemma 2.11]{HIO14}}]
\label{HIO}
Let $\Lambda_i$ be an $n_i$-representation-infinite algebra, for $i =1, 2$. Then 
\begin{itemize}
\item $\Lambda_1\otimesk\Lambda_2$ is $(n_1+n_2)$-representation-infinite;
\item $\Se(\Lambda_1\otimesk\Lambda_2) \cong \Se(\Lambda_1)\otimesk\Se(\Lambda_2)$;
\item $\Se_{n_1+n_2}(\Lambda_1\otimesk\Lambda_2) \cong \Se_{n_1}(\Lambda_1)\otimesk\Se_{n_2}(\Lambda_2)$.
\end{itemize}
\end{theorem}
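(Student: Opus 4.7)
My plan is to prove the three items in the order (ii), (iii), (i). The overarching principle is that since $k$ is algebraically closed of characteristic zero, we have the identifications $(\Lambda_1\otimesk\Lambda_2)^e \cong \Lambda_1^e\otimesk \Lambda_2^e$ and $D(\Lambda_1\otimesk\Lambda_2) \cong D\Lambda_1\otimesk D\Lambda_2$ as bimodules, and moreover the tensor product over $k$ of projective bimodules is projective over the tensor-product enveloping algebra. Consequently, if $P_\bullet^i \to \Lambda_i$ are projective bimodule resolutions, then $P_\bullet^1\otimesk P_\bullet^2 \to \Lambda_1\otimesk\Lambda_2$ is a projective bimodule resolution of length $n_1+n_2$, yielding global dimension exactly $n_1+n_2$ (equality coming from nonvanishing of $\Ext^{n_1+n_2}$ between simples of the form $S_1\otimesk S_2$).

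For item (ii), given $M_i \in \mathcal D^b(\Lambda_i)$ I would compute $\Se(M_1\otimesk M_2)$ by resolving $M_1\otimesk M_2$ over $\Lambda_1\otimesk\Lambda_2$ via the external tensor product of projective resolutions over each $\Lambda_i$. Since $k$ is a field, this external tensor product preserves quasi-isomorphisms and projectivity, so the derived tensor product decouples as
\[
\Se(M_1\otimesk M_2) = (D\Lambda_1\otimesk D\Lambda_2)\otimes^{\mathbf{L}}_{\Lambda_1\otimesk\Lambda_2}(M_1\otimesk M_2) \cong \Se(M_1)\otimesk \Se(M_2).
\]
Setting $M_i=\Lambda_i$ gives (ii). For (iii), since $\Se_n = \Se\circ[-n]$ and the shift on an external tensor product satisfies $(X\otimesk Y)[-n_1-n_2] \cong X[-n_1]\otimesk Y[-n_2]$, combining with (ii) produces $\Se_{n_1+n_2}(\Lambda_1\otimesk\Lambda_2) \cong \Se_{n_1}(\Lambda_1)\otimesk\Se_{n_2}(\Lambda_2)$.

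For (i), iterating the more general tensor-product form of (ii) proved above yields, by induction on $l$, the formula $\Se_{n_1+n_2}^{-l}(\Lambda_1\otimesk\Lambda_2)\cong \Se_{n_1}^{-l}(\Lambda_1)\otimesk\Se_{n_2}^{-l}(\Lambda_2)$ for all $l\geq 0$; since each factor lies in $\mbox{mod }\Lambda_i$ by the representation-infinite hypothesis, the tensor product over $k$ lies in $\mbox{mod}(\Lambda_1\otimesk\Lambda_2)$, proving that the tensor product algebra is $(n_1+n_2)$-representation-infinite. The main technical obstacle I anticipate is making the derived tensor product decoupling in (ii) fully rigorous: ensuring that the external tensor product of projective bimodule resolutions really is a projective resolution of the tensor-product bimodule, and that the Serre functor of the tensor-product algebra commutes with the external tensor product at the derived level (handling the $D$-duality of bimodules and the flatness over $k$ carefully). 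Once this homological bookkeeping is in place, all three items follow essentially formally.
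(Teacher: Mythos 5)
Your proposal is correct, and it is essentially the standard argument: the paper itself states this theorem as a quotation from \cite{HIO14} without proof, and the Künneth-style decoupling of $D\Lambda\mathbin{\mathop{\otimes}\limits^{\textbf{L}}}_{\Lambda}-$ over external tensor products of projective resolutions, followed by iteration of the inverse Serre functor, is exactly how the cited source proves it. The only bookkeeping worth making explicit is that the general bifunctorial form $\Se(M_1\otimesk M_2)\cong\Se(M_1)\otimesk\Se(M_2)$ (not just the case $M_i=\Lambda_i$) is what feeds the induction on $l$ in item (i), together with the observation that $\mathrm{gl.dim}(\Lambda_1\otimesk\Lambda_2)=n_1+n_2$, both of which you address.
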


Combining these results together, we get the following.

\begin{corollary}
Let $\Lambda_i$ be as above and let $\Pi_i = \Pi(\Lambda_i)$, for $i =1, 2$, be the preprojective algebra associated to $\Lambda_i$. Then the Segre product 
\[
	\Pi_1*\Pi_2:= \bigoplus_{\ell\geq 0}\left( (\Pi_1)_{\ell}\otimesk(\Pi_2)_{\ell}\right)
\]
is the $(n_1+n_2+1)$-preprojective algebra over $\Lambda_1\otimesk\Lambda_2$.
\end{corollary}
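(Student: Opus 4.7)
The plan is to identify $\Pi_1 * \Pi_2$ with $\Pi(\Lambda_1 \otimes_k \Lambda_2)$, whose existence as a preprojective algebra is guaranteed by Theorem \ref{HIO}. Concretely, by the first bullet of Theorem \ref{HIO}, the tensor product $\Lambda_1 \otimes_k \Lambda_2$ is $(n_1+n_2)$-representation-infinite, so its $(n_1+n_2+1)$-preprojective algebra is well defined, and it suffices to show that it agrees with the Segre product as graded algebras.

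First, I would use the description of the preprojective algebra via iterates of the shifted Serre functor, namely
\[
\Pi(\Lambda_1 \otimes_k \Lambda_2) \;\cong\; \bigoplus_{l\geq 0} \Se_{n_1+n_2}^{-l}(\Lambda_1 \otimes_k \Lambda_2),
\]
with the degree $l$ piece $\Se_{n_1+n_2}^{-l}(\Lambda_1\otimes_k\Lambda_2)$ sitting in cohomological degree zero thanks to the representation-infiniteness. Next, I would invoke the third bullet of Theorem \ref{HIO}, which says $\Se_{n_1+n_2}(\Lambda_1\otimes_k\Lambda_2) \cong \Se_{n_1}(\Lambda_1)\otimes_k \Se_{n_2}(\Lambda_2)$, and observe that since $\Se_{n_1+n_2}$ is an autoequivalence the same identification holds for the inverse, and iterating gives
\[
\Se_{n_1+n_2}^{-l}(\Lambda_1 \otimes_k \Lambda_2) \;\cong\; \Se_{n_1}^{-l}(\Lambda_1)\,\otimes_k\,\Se_{n_2}^{-l}(\Lambda_2) \;=\; (\Pi_1)_l \,\otimes_k\, (\Pi_2)_l
\]
for every $l\geq 0$. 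Assembling these isomorphisms degree by degree yields the underlying graded vector space of $\Pi_1 * \Pi_2$.

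What remains is to check that this identification is compatible with multiplication. The preprojective algebra structure on $\bigoplus_l \Se_n^{-l}(\Lambda)$ comes from the isomorphisms $\Se_n^{-l}(\Lambda) \cong \Ext^{n}_{\Lambda}(D\Lambda, \Lambda)^{\otimes_{\Lambda} l}$ and the tensor algebra product; I would pin down the identification by verifying, via the Künneth formula, that
\[
\Ext^{n_1+n_2}_{\Lambda_1\otimes_k\Lambda_2}\!\bigl(D(\Lambda_1\otimes_k\Lambda_2),\, \Lambda_1\otimes_k\Lambda_2\bigr) \;\cong\; \Ext^{n_1}_{\Lambda_1}(D\Lambda_1,\Lambda_1) \,\otimes_k\, \Ext^{n_2}_{\Lambda_2}(D\Lambda_2,\Lambda_2),
\]
after which the tensor power of this bimodule over $\Lambda_1\otimes_k\Lambda_2$ splits as a tensor product of tensor powers over $\Lambda_1$ and $\Lambda_2$ respectively, matching the Segre multiplication $(a\otimes b)\cdot(a'\otimes b') = aa' \otimes bb'$ term by term.

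The only real subtlety is the compatibility in the last paragraph: one must confirm that the Künneth isomorphism is an isomorphism of $(\Lambda_1\otimes_k\Lambda_2)$-bimodules and that the relative tensor products over $\Lambda_1\otimes_k\Lambda_2$ distribute over the external tensor product; both are standard but deserve explicit mention. Once this is in place, the chain of identifications $\Pi(\Lambda_1\otimes_k\Lambda_2) \cong \Pi_1 * \Pi_2$ is automatic, and the corollary follows.
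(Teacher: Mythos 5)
Your proposal is correct and follows essentially the same route as the paper: both identify $(\Pi_1)_l\otimesk(\Pi_2)_l$ with $\Se_{n_1+n_2}^{-l}(\Lambda_1\otimesk\Lambda_2)$ via Theorem \ref{HIO} and then invoke the $(n_1+n_2)$-representation-infiniteness of $\Lambda_1\otimesk\Lambda_2$. The paper's proof is in fact terser than yours, leaving the multiplicative compatibility (which you verify via K\"unneth) implicit.
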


\begin{proof}
We have that 
\[
	\Pi_1*\Pi_2 = \bigoplus_{\ell\geq 0}\left( (\Pi_1)_{\ell}\otimesk(\Pi_2)_{\ell}\right) \cong \bigoplus_{\ell\geq 0}\left( \Se_{n_1}^{-\ell}(\Lambda_1)\otimesk  \Se_{n_2}^{-\ell}(\Lambda_2)\right) \cong \bigoplus_{\ell\geq 0} \Se_{n_1+n_2}^{-\ell}\left(\Lambda_1\otimesk\Lambda_2\right).
\]
Since $\Lambda_1\otimesk\Lambda_2$ is $(n_1+n_2)$-representation-infinite, the result follows.
\end{proof}

\begin{remark}
\label{remark:geometry}
We showed so far in this section that the tensor product of two preprojective algebras along with the natural grading is not a higher preprojective algebra. In contrast, the Segre product is the preprojective algebra of a tensor product of higher representation-infinite algebras. This intuitively makes sense from a geometric point-of-view. Suppose that $Z$ is a smooth projective Fano variety and let $Y :=  \operatorname{Tot}\nolimits (\omega_Z)$ be the total space of the canonical line bundle of $Z$. Denote the bundle map by $\varphi: Y\to Z$. Then $Y$ is a Calabi--Yau variety. We can view $Y$ as a geometric analogue of a preprojective algebra as follows. Let $T\in \bd(\mathsf{Coh}\, Z)$ be a tilting object in the bounded derived category of coherent sheaves on $Z$ and $\Lambda = \End_{Z}(T)$, so that $\bd(\Lambda)$ is equivalent to $\bd(\mathsf{Coh}\, Z)$. Then, by \cite[Theorem 3.6]{BS10}, there is an equivalence 
\[
	\DD(\Pi(\Lambda))\xrightarrow{\sim} \bd(\mathsf{Coh}\,Y),
\]
sending $\Pi(\Lambda)$ to $\varphi^*(T)$. Now suppose that $Z := Z_1\times Z_2$ is given by a product of smooth projective Fano varieties and let $\pi_i: Z\to Z_i$ be the projections for $i=1,2$. If $T_i\in \bd(\mathsf{Coh}\, Z_i)$ are tilting objects, then $T := \pi_1^*T_1\otimes \pi_2^*T_2$ is a tilting object in $\bd(\mathsf{Coh}\, Z)$. We then get an equivalence 
\[
	\bd(\Lambda_1\otimesk \Lambda_2)\xrightarrow{\sim}\bd(\mathsf{Coh}\,Z_1\times Z_2),
\]
where $\Lambda_i = \End_Z(\pi_i^*(T_i))$, for $i=1,2$, which gives a geometric analogue to the first point in Theorem \ref{HIO}.
 Thus, there is an equivalence 
 \[
\DD(\Pi(\Lambda_1)*\Pi(\Lambda_2))\xrightarrow{\sim} \bd(\mathsf{Coh}\,\operatorname{Tot}\nolimits (\omega_{Z_1}\otimesk\omega_{Z_2})).
\] 
The tensor product gives on the other hand an equivalence 
 \[
 	\DD(\Pi(\Lambda_1)\otimesk\Pi(\Lambda_2))\xrightarrow{\sim} \bd(\mathsf{Coh}\, \operatorname{Tot}\nolimits (\omega_{Z_1})\times \operatorname{Tot}\nolimits (\omega_{Z_2}) ).
 \]    
 Since $\operatorname{Tot}\nolimits (\omega_{Z_1})\times \operatorname{Tot}\nolimits (\omega_{Z_2})\cong \operatorname{Tot}\nolimits(\omega_{Z_1}\oplus\omega_{Z_2})$, we see that it is naturally isomorphic to the total space of a rank $2$ vector bundle, and not to the total space of a line bundle. Hence we should not expect the tensor product of two preprojective algebras to have a natural structure of preprojective algebra. This is coherent with Proposition \ref{tensor_product_natural_grading}. This does not however tell us much about the existence of any "non-natural" grading structure that would allow us to express $\Pi = \Pi(\Lambda_1)\otimesk\Pi(\Lambda_2)$ as a higher preprojective algebra.  Such gradings, even if not coming from the natural grading structure, would mean that $\Pi(\Lambda_1)\otimesk\Pi(\Lambda_2)$ has some interesting properties. We would for example then obtain a tilting object in $\underline{\mathsf{CM}}^{\Z}(e\Pi e)$ for an idempotent $e$ satisfying some additional properties (\cite[Theorem 4.1]{AIR15}), as mentioned in the introduction. 
 \end{remark}


\subsection{Superpotential of a tensor product}

Suppose that $A^i = T_{S_i} V_i/\langle M_i \rangle$ is an $n_i$-Calabi--Yau Koszul algebra, where $n_i\geq1$, $S_i$ is a finite-dimensional semisimple $k$-algebra, $V_i$ is a $S_i$-bimodule and $M_i$ are relations, for $i=1,2$. Then, $A:=A^1\otimesk A^2$ is $(n_1+n_2)$-Calabi--Yau and Koszul. It is thus a derivation-quotient algebra of the form $D(\omega, n-2)$ for some superpotential $\omega$, by Theorem \ref{BSW}. We would like to describe this superpotential in terms of $\omega_1$ and $\omega_2$, the superpotentials in $T_{S_1} V_1$ and $T_{S_2} V_2$, respectively, such that $A^i \cong D(\omega_i, n_i-2)$, for $i=1,2$. This is needed for the main theorem of this section. Note that we sometimes refer to $\omega_i$ (resp. $\omega$) simply as the superpotentials of $A^i$ (resp. $A$).  \\

The algebra $A$ has a tensor algebra structure given as follows: 
\[
	A = T_S V/\langle M\rangle, 
\]
where $S:=S_1\otimesk S_2$, $V = (V_1\otimesk S_2\oplus S_1\otimesk V_2)$, and $M$ is the $S$-bimodule of relations in $A$, induced from the relations in $A^1$ and $A^2$ and new relations coming from the tensor product. The space $V$ has a natural $S$-bimodule structure. Indeed, if $a\otimesk b\in V$, then $s = s_1\otimesk s_2$ acts on the right as 
\[
	(a\otimesk b)\cdot (s_1\otimesk s_2) = (a\cdot s_1)\otimesk (b\cdot s_2),
\]  
and the action extends linearly. The left action is similar.  \\

Recall from Remark \ref{sp_gen} that a superpotential in $A$ is a $S$-bimodule generator of 
\[
	K_n:=\bigcap_{\mu} (V^{\otimes\mu}\otimes_S M \otimes_S V^{\otimes n-\mu-2}),
\]
where $n = n_1+n_2$. We first give the definition of the shuffle product. 

\begin{definition}[See, e.g., {\cite[6.5.11 \& 8.5.4]{Wei94}}]
The \emph{shuffle product} 
\[
	\shuffle: V_1^{\otimes_{S_1} n_1}\otimes_k V_2^{\otimes_{S_2} n_2}\to V^{\otimes_{S} n}
\]
is defined as
\begin{equation*}
\begin{split}
	v_{n_1+n_2}\otimes\cdots&\otimes v_{n_2+1}\shuffle v_{n_2}\otimes\cdots\otimes v_{1} := \\
	 &\sum_{\tau \text{ shuffle}} (-1)^{\tau} \tilde v_{\tau^{-1} (n_1+n_2)}\otimes \tilde v_{\tau^{-1} (n_1+n_2-1)}\otimes\cdots\otimes \tilde v_{\tau^{-1} (n_2+1)}\otimes \tilde v_{\tau^{-1} (n_2)}\otimes\cdots\otimes \tilde v_{\tau^{-1} (1)},
\end{split}
\end{equation*}
where the sum runs over all \emph{$(n_1, n_2)$-shuffles} $\tau$, that is, elements $\tau$ of the symmetric group $\mathfrak{S}_{n_1+n_2}$ which satisfy $\tau(1)<\tau(2)<\cdots<\tau(n_2)$ and $\tau(n_2+1)<\tau(n_2+2)<\cdots<\tau(n_1+n_2)$. Here $\tilde v_i\in V$ denotes $v_i \otimesk 1$ if $v_i\in V_1$ and $1\otimesk v_i$ if $v_i\in V_2$. 
\end{definition}

\begin{theorem}
\label{shuffle}
Let $\omega_i$ be a superpotential in $T_{S_i} V_i$ such that $A^i\cong D(\omega_i, n_i-2)$, for $i=1,2$. The element  $\omega := \omega_1\shuffle \omega_2$ is a superpotential in $T_S V$ and $A \cong D(\omega, n-2)$. 
\end{theorem}

\begin{proof}
We first show that $\omega$ is a $S$-bimodule generator of $K_n$. As mentioned in (\ref{tor_kl}), there are isomorphisms 
\[
	\Tor^{A^i}_{n_i}(S_i, S_i)\cong\bigcap_{\mu} (V_i^{\otimes\mu}\otimes_{S_i} M_i \otimes_{S_i} {V_i}^{\otimes n_i-\mu-2})=:K_{n_i}^i. 
\]
for $i=1,2$, coming from the Koszul property. We may thus identify the superpotential $\omega_i\in K_{n_i}^i$ with a $S_i$-bimodule generator $\tilde\omega_i$ of $\Tor_{n_i}^{A^i}(S_i, S_i)$, for $i =1, 2$. \\

The shuffle product induces a chain homotopy equivalence, see for example \cite[Proposition 8.6.13]{Wei94}: 
\[
	\Tot(\mathbb B(A^1, S_1)\otimesk\mathbb B(A^2, S_2))\xrightarrow{\sim}\mathbb B(A^1\otimesk A^2, S_1\otimesk S_2),
\]
where Tot denotes the total complex of the tensor product and $\mathbb B$ is the bar resolution, defined in (\ref{bar_res}). We then obtain a $S$-bimodule isomorphism 
\[
	\shuffle: \Tor^{A^1}_{n_1}(S_1, S_1)\otimesk \Tor^{A^2}_{n_2}(S_2, S_2)\xrightarrow{\sim} \Tor^A_{n_1+n_2}(S_1\otimesk S_2, S_1\otimesk S_2).
\]
Thus, the image $\tilde\omega_1\shuffle\tilde\omega_2$ of $\tilde\omega_1\otimes\tilde\omega_2$ via the shuffle map is a $S$-bimodule generator of 
\[
	\Tor_{n_1+n_2}^A(S_1\otimesk S_2, S_1\otimesk S_2)\cong K_n.
\]
Combining with the fact that $\omega$ is a superpotential, which we prove next, this implies that the complex $W_{\bullet}$, defined in (\ref{eq:w}), is the Koszul resolution of $A$. Thus, $W_2 = M$, so  
\[
	A\cong T_S V/\langle W_2\rangle= D(\omega, n-2).
\] 

We now show that $\omega$ commutes with the action of $S$. If $s = s_1\otimesk s_2\in S$, then 
\[
	(\omega_1\shuffle \omega_2)\cdot s = (\omega_1\cdot s_1)\shuffle (\omega_2\cdot s_2) = (s_1\cdot \omega_1)\shuffle (s_2\cdot\omega_2) = s\cdot(\omega_1\shuffle\omega_2). 
\]
The second equality comes from the fact that $\omega_i$ commutes with the action of $S_i$, for $i=1,2$. The first and third equalities are true for any shuffle of elements in $V_1^{\otimes n_1}$ and $V_2^{\otimes n_2}$. In fact, 
\begin{equation*}
\begin{split}
	(v_{n_1+n_2}\otimes\cdots\otimes& v_{n_2+1}\shuffle v_{n_2}\otimes\cdots\otimes v_{1})\cdot (s_1\otimesk s_2) =\\
	 \sum_{\substack{\tau \text{ shuffle} \\ \tau(n_2+1) = 1}} (-1)^{\tau} &\tilde v_{\tau^{-1} (n_1+n_2)}\otimes \tilde v_{\tau^{-1} (n_1+n_2-1)}\otimes\cdots\otimes \tilde v_{\tau^{-1} (n_2+1)} \otimes \tilde v_{\tau^{-1} (n_2)}\otimes\cdots\otimes(1\otimesk v_{1}\cdot s_2) \\
	 &\otimes(v_{\tau^{-1}(\tau(1)-1)}\otimesk 1)\otimes\cdots\otimes (v_{n_2+1}\cdot s_1\otimesk 1) \\
	+ \sum_{\substack{\tau \text{ shuffle} \\ \tau(1) = 1}} (-1)^{\tau} &\tilde v_{\tau^{-1} (n_1+n_2)}\otimes \tilde v_{\tau^{-1} (n_1+n_2-1)}\otimes\cdots\otimes \tilde v_{\tau^{-1} (n_2+1)} \otimes \tilde v_{\tau^{-1} (n_2)}\otimes\cdots\otimes(v_{n_{2}+1}\cdot s_1\otimesk 1) \\
	&\otimes(1\otimesk v_{\tau^{-1}(\tau(n_2+1)-1)} )\otimes\cdots\otimes (1\otimesk v_{1}\cdot s_2),
\end{split}
\end{equation*}
which is equal to $(v_{n_1+n_2}\otimes\cdots\otimes v_{n_2+1}\cdot s_1)\shuffle (v_{n_2}\otimes\cdots\otimes v_{1}\cdot s_2)$. The third equality is proven similarly.\\

Finally, we prove that $\omega$ is super-cyclically symmetric. By Lemma \ref{super_cycl}, the superpotentials $\omega_1$ and $\omega_2$ are given as a linear combination of elements of the form 
\begin{equation*}
\begin{split}
	\sum_{i=0}^{n_1-1}(-1)^{i(n_1-1)} \sigma_{n_1}^i(v_{n_1+n_2}\otimes\cdots\otimes v_{n_2+1})\quad\text{and}\quad
	\sum_{j=0}^{n_2-1}(-1)^{j(n_2-1)} \sigma_{n_2}^j(v_{n_2}\otimes\cdots\otimes v_{1})
\end{split}
\end{equation*}
respectively. Thus, $\omega_1\shuffle\omega_2$ is given as a linear combination of elements of the form 
\[
	\overline{\omega_1\shuffle \omega_2}:= \sum_{i=0}^{n_1-1}\sum_{j=0}^{n_2-1}(-1)^{i(n_1-1)+j(n_2-1)}(\sigma_{n_1}^i(v_{n_1+n_2}\otimes\cdots\otimes v_{n_2+1})\shuffle  \sigma_{n_2}^j(v_{n_2}\otimes\cdots\otimes v_{1})).
\]
It suffices to show that these are super-cyclically symmetric. In the following we shall use the notation $\shuffle_{\tau(i) = j}$ to denote the sum over shuffles satisfying $\tau(i) = j$ and we let $a_{i,j} = i(n_1-1)+j(n_2-1)$. By rearranging the order of the terms, we have that $ \overline{\omega_1\shuffle \omega_2}$ is equal to
\begin{align*}
\begin{split}
 &\,\,\,\, \sum_{i=0}^{n_1-1}\sum_{j=0}^{n_2-1}(-1)^{a_{i,j}}((-1)^{n_1-1}\sigma_{n_1}^{i-1}(v_{n_1+n_2}\otimes\cdots\otimes v_{n_2+1})\shuffle_{\tau(n_1+n_2 -i +1) = n_1+n_2} \sigma_{n_2}^j(v_{n_2}\otimes\cdots\otimes v_{1})\\
 &\qquad\qquad\qquad\qquad+(-1)^{n_2-1} \sigma_{n_1}^{i}(v_{n_1+n_2}\otimes\cdots\otimes v_{n_2+1})\shuffle_{\tau(n_2-j+1) = n_1+n_2} \sigma_{n_2}^{j-1}(v_{n_2}\otimes\cdots\otimes v_{1}))
\end{split}
\\
\begin{split}
 &= \sigma_n^{-1}\left(\sum_{i=0}^{n_1-1}\sum_{j=0}^{n_2-1}(-1)^{a_{i,j}}((-1)^{n-1}\sigma_{n_1}^{i}(v_{n_1+n_2}\otimes\cdots\otimes v_{n_2+1})\shuffle \sigma_{n_2}^j(v_{n_2}\otimes\cdots\otimes v_{1}))\right) 
  \end{split}
 \\
 & = (-1)^{n-1}\sigma_n^{-1}(\overline{\omega_1\shuffle \omega_2}).
\end{align*} 
\end{proof}


\subsection{Preprojective structure on the tensor product of preprojective algebras}

The goal of this subsection is to prove that the tensor product of two basic Koszul Calabi--Yau algebras cannot have a grading giving it the structure of a preprojective algebra.  We begin by proving some general statements. \\

Let $A^i = kQ^i/\langle M_i \rangle$ be a basic algebra, where $Q^i = (Q^i_0, Q^i_1)$ is a quiver, for $i=1,2$. We recall the description of the quiver of the tensor product $A:= A^1\otimesk A^2$. Following \cite[Section 3]{GMV98}, the quiver $Q = (Q_0, Q_1)$ of $A$ is described as follows: 
\[
	Q_0 = Q^1_0\times Q^2_0
\]
and 
\[
	Q_1 = (Q^1_1\times Q^2_0)\cup(Q^1_0\times Q^2_1).
\]
Thus, for every arrow $a^1: e_i^1\to e_j^1$ in $Q^1$, there are arrows
\[
	(a^1, e_s^2): (e_i^1,e_s^2)\to(e_j^1, e_s^2),
\]
for every $e_s^2\in Q^2$. Similarly, we get the arrows of type $(e_s^1, a^2)$. Moreover, for every linear combination of paths $\sum\mu_i p_i^1\in Q^1$, where $p_i^1 = a_{i_{\ell}}^1\cdots a_{i_1}^1$, $\mu_i\in k$, and every vertex $e_s^2\in Q^2$, there exists a linear combination of paths
\[
	\sum \mu_i (a_{i_{\ell}}^1, e_s^2)\cdots (a_{i_1}^1, e_s^2).
\]
The paths in the second component are obtained in a similar way. \\

For every arrow $a^1: e_i^1\to e_j^1\in Q^1$ and $a^2: e_s^2\to e_t^2\in Q^2$, let $\overline M$ be the ideal generated by the relations
\[
	(a^1, e_t^2)(e_i^1, a^2)-(e_j^1, a^2)(a^1, e^2_s).
\]
Then $A\cong kQ/\langle \tilde M_1, \tilde M_2, \overline M\rangle$, where 
\[
	\tilde M_1 = \{(f^1_i, e'')\,\, | \,\, f^1_i\in M_1, e''\in Q^2_0\}
\]
and 
\[
	\tilde M_2 = \{(e', f^2_j)\,\, | \,\, f^2_j\in M_2, e'\in Q^1_0\}. 
\]
When $A^1$ and $A^2$ are Koszul, then the Koszul grading on $A$ is the standard one given by the tensor product. \\

Now let $A = T_S V/\langle M\rangle $ be a bimodule $n$-Calabi--Yau Koszul algebra of Gorenstein parameter $1$. Let $P_{\bullet}$ denote the minimal bimodule resolution of $A$. Recall that the terms in $P_{\bullet}$ are given by 
\[
	P_{\ell} = A\otimes_S K_{\ell}\otimes_S A,
\]
where $K_{\ell}\subset V^{\otimes_S \ell}$ is the usual term in the Koszul resolution given in Definition \ref{koszul_complex}. Let $1\otimes v_{\ell}\otimes \cdots \otimes v_1\otimes 1\in P_{\ell}$, where we now view $P_{\ell}$ as a graded $A$-bimodule with respect to the preprojective grading, defined in Remark \ref{grading_type}. There is a corresponding element $v_{\ell}\otimes \cdots \otimes v_1$ in the ring $T_S V$, also endowed with a preprojective grading. In many instances, we will use the fact that these two elements have the same preprojective degree, denoted by $\deg_{P_{\ell}}$ and $\deg_{T_S V}$, respectively. This is not necessarily clear a priori, so we first check that this is indeed the case.      

\begin{lemma}
\label{key}
Let $A = T_S V/\langle M\rangle$ be a Koszul $n$-Calabi--Yau algebra of Gorenstein parameter $1$. Then
\[
	\deg_{P_{\ell}}(1\otimes v_{\ell}\otimes\cdots\otimes v_1\otimes 1) = \deg_{T_S V}(v_{\ell}\otimes\cdots\otimes v_1), 
\]
for any basis $v_{\ell}\otimes\cdots\otimes v_1\in K_{\ell}$. 
\end{lemma}

\begin{proof}
We proceed by induction on ${\ell}$ and use the fact that the differentials $d$ in the resolution are homogeneous with respect to the preprojective grading. For $\ell=1$, we have 
\[
	d(1\otimes v_1\otimes 1) = v_1\otimes 1 - 1\otimes v_1\in P_0=A\otimes_SA.
\]
Hence, 
\[
	\deg_{P_1}(1\otimes v_1\otimes 1) = \deg_{P_0}(v_1\otimes 1 - 1\otimes v_1).
\]
The bimodule $A\otimes_SA$ is generated in degree $0$ by $1\otimes 1$, so 
\[
	\deg_{P_0}(a\cdot (1\otimes1)\cdot b) = \deg_{T_S V} (a) + \deg_{T_S V} (b),
\]
for any $a,b\in T_S V$. 
 We conclude that 
\[
	\deg_{P_0}(v_1\otimes 1 - 1 \otimes v_1) = \deg_{P_0}(v_1\cdot (1\otimes 1) - (1\otimes 1)\cdot v_1) = \deg_{T_S V}(v_1). 
\]  
Applying induction and using 
\[
	d(1\otimes (v_{\ell}\otimes\cdots\otimes v_{1})\otimes 1) = v_{\ell}\otimes (v_{\ell-1}\otimes\cdots\otimes v_{1})\otimes 1 + (-1)^{\ell} 1\otimes (v_{\ell}\otimes\cdots\otimes v_{2})\otimes v_{1},
\]
we obtain 
\begin{align*}
	\deg_{P_{\ell}}(1\otimes v_{\ell}\otimes\cdots\otimes &v_1\otimes 1)\\
	 &= \deg_{P_{\ell-1}}(v_{\ell}\cdot 1\otimes (v_{\ell-1}\otimes\cdots\otimes v_{1})\otimes 1 + (-1)^{\ell} 1\otimes (v_{\ell}\otimes\cdots\otimes v_{2})\otimes 1\cdot v_{1})\\
	&= \deg_{T_S V}(v_{\ell}) + \deg_{T_S V} (v_{\ell-1}\otimes \cdots\otimes v_1)\\
	& = \deg_{T_S V}(v_{\ell}\otimes \cdots \otimes v_1).
\end{align*}
\end{proof}

If $A$ is basic, then the superpotential $\omega$ is given as a linear combinations of closed paths of length $n$, since it commutes with the $S$-action. We refer to those paths as \emph{(closed) summands} of $\omega$. Before stating the main theorem, we need the following lemma. 

\begin{lemma}
\label{lem:arrow_cont}
Let $A= T_S V/\langle M\rangle$ be a basic $n$-Calabi--Yau Koszul algebra with quiver $Q$ and superpotential $\omega$. Every arrow $a\in Q$ is contained in a closed cycle which is a summand of $\omega$.  
\end{lemma}

\begin{proof}
By Proposition \ref{prop:cy_duality}, there is an isomorphism 
\[
	\Ext^1_A(X,Y)\cong D\Ext^{n-1}_A(Y,X), 
\] 
for any finite-dimensional left $A$-modules $X, Y$. Combining with the isomorphism (\ref{tor_kl}), we obtain 
\[
	\dim_k e_jK_1e_i=\dim_k\Ext_A^1(S_i, S_j) = \dim_k\Ext^{n-1}_A(S_j, S_i) = \dim_k e_i K_{n-1} e_j, 
\]  
for any $i, j\in S$, where $S_i$ denotes the simple module at vertex $i$. Now, using Remark \ref{sp_gen} and the fact that $K_1 = V$, we conclude that the number of arrows $i\to j$ is the same as the number of (non-zero) elements of the form $e_i\delta_a \omega e_j$ from $j\to i$, with $a\in V$. Hence, every arrow $a:i\to j$ is in a closed cycle in $a e_i\delta_a\omega e_j$, which is a summand $\omega$.   
\end{proof}

Let $A^i = T_{S_i} V_i/\langle M_i \rangle$ be a basic bimodule $n_i$-Calabi--Yau Koszul algebra, for $i =1, 2$, where $n_1, n_2\geq 1$. Assume in addition that the quiver $Q^i$ of $A^i$ is connected. Then 
\[
	A: = A^1\otimesk A^2 \cong T_S V/\langle M \rangle,
\]
is an $n$-Calabi--Yau Koszul algebra, where $n = n_1 + n_2$, $S:= S_1\otimesk S_2$, $V = V_1\otimesk S_2\oplus S_1\otimesk V_2$ and $M$ are induced relations. Let $\omega_i$ be a superpotential of $A^i$, $i=1,2$, and $\omega: = \omega_1\shuffle\omega_2$ be a superpotential of $A$, according to Theorem \ref{shuffle}. \\

Recall from Remark \ref{rem:locally_finite} that an important feature of preprojective algebras is that their degree $0$ part is finite-dimensional. This is what breaks when trying to put a grading structure of preprojective algebra on $A^1\otimesk A^2$. 

\begin{theorem}
\label{no_tensor}

Let $A^1$ and $A^2$ be as above and let $A: = A^1\otimesk A^2$. If $A$ admits a grading such that the minimal graded $A$-bimodule resolution $P_{\bullet}$ of $A$ satisfies 
\[
	P_{\bullet}\cong P_{\bullet}^{\vee}[n](-1)\quad\text{in } \mathsf C^b(\mathsf{grproj}\,A^e),
\] 
then the degree $0$ part is of the form $\Lambda_1\otimesk \Lambda_2$, where $\Lambda_i = A^i_0$ and $\Lambda_j= A^j$ is $n_j$-Calabi--Yau, for some $i\not=j\in\{1,2\}$. Moreover, $A^i$ admits a grading giving it Gorenstein parameter $1$.   \\

In particular, $A_0$ must be infinite-dimensional. Hence, there does not exist a grading on $A$ giving it the structure of a preprojective algebra.

\end{theorem}

\begin{proof}
Let $Q^i$ be the quiver of $A^i$, for $i =1, 2$ and $Q$ be the quiver of $A$. For a path $p: i\to j$, we define $t(p) := i$ and $h(p):=j$.\\

By Lemma \ref{gen}, the $S$-bimodule generator $\omega\in K_{n_1+n_2}$ in $P_{\bullet}$, which is the superpotential of $A$, is homogeneous of degree $1$. Moreover, by Lemma \ref{key}, the degree of the elements in $K_{\ell}$ is the same as the degree of the associated paths in the ring $T_S V$. \\

The superpotential $\omega$ of $A$ is of the form $\omega = \omega_1\shuffle\omega_2$. By definition, all summands $\mathfrak p$ of $\omega_2\otimesk\omega_1$ are summands of $\omega$. They are closed paths $(e, e')\to (e,e')$ of length $(n_1+n_2)$, for some vertex $(e, e')$ in $Q_0$, and consist of the concatenation of two closed summands $\mathfrak p_1, \mathfrak p_2: (e, e')\to (e, e')$ of length $n_1$ and $n_2$ in $(\omega_1, e')$ and $(e,\omega_2)$, respectively. This is explained by the fact that both $\omega_1$ and $\omega_2$ commute with the action of $S$.\\

Consider one of these summands $\mathfrak p = \mathfrak p_2\mathfrak p_1 =(e, q)(p, e')$, where $p$ is a closed summand of $\omega_1$, and $q$ is a closed summand of $\omega_2$. Since $\omega$ is homogeneous of degree $1$, the path $(e, q)(p, e')$ is in degree $1$. By additivity of the degrees, either $(e, q)$ or $(p, e')$ is in degree $1$, say $(p, e')$ without loss of generality. Then $(e, q)$ is in degree $0$. Denote by $(a, e')$ the arrow in $(p, e')$ which is in degree $1$.\\

We divide the proof into the following steps. 
\begin{enumerate}[1)]
\item  We show that $\deg_{T_S V}(p,\varepsilon') = 1$ for any vertex $\varepsilon'$ appearing in the path $q$ and that $\deg_{T_S V}(\varepsilon,q) = 0$ for any vertex $\varepsilon$ appearing in the path $p$.
\item Using Lemma \ref{lem:arrow_cont} and the connectivity of $Q$, we proceed to demonstrate that \\$\deg_{T_S V}(p, \varepsilon') = 1$ for any vertex $\varepsilon'\in Q^2_0$ and $\deg_{T_S V}(\varepsilon, q) = 0$ for any vertex $\varepsilon\in Q^1_0$.
\item Let $\varrho$ be another summand of $\omega_1$. We show that $\deg_{T_S V}(\varrho, \varepsilon') =1$ for any vertex $\varepsilon'\in Q^2_0$. Likewise, we obtain that $\deg_{T_S V}(\varepsilon, \varrho') =1$ for any vertex $\varepsilon\in Q^1_0$ and closed summand $\varrho'$ of $\omega_2$. This implies that for any vertex $\varepsilon\in Q^1_0$, $\deg_{T_S V}(\varepsilon, \omega_2) = 0$ and for any $\varepsilon'\in Q^2_0$, $\deg_{T_S V}(\omega_1, \varepsilon') = 1$. 
\item We finally show that $\deg_{T_S V}(a, \varepsilon') = 1$ for any $\varepsilon'\in Q^2_0$.   

\end{enumerate}

Combining these steps, we conclude that for any arrow $\alpha\in Q^1$ and vertex $\varepsilon'\in Q^2_0$,
\[
	\deg_{T_S V}(\alpha, \varepsilon') = \deg_{T_{S_1} V_1}(\alpha),
\] 
so their degree is induced from the degree of the corresponding arrow in $A^1$. In particular, $A^1$ must have Gorenstein parameter $1$. Similarly, for any arrow $\beta\in Q^2$ and vertex $\varepsilon\in Q^1$,  
\[
	\deg_{T_S V}(\varepsilon, \beta) = 0.
\]
Thus, the degree $0$ part of $A$ must be of the form $\Lambda_1\otimesk \Lambda_2$, where $\Lambda_1 = A^1_0$ and $\Lambda_2 = A^2$ is $n_2$-Calabi--Yau.\\

\noindent \underline{\emph{Step 1}}: Let $\sigma_{n_1}\in\mathfrak S_{n_1}$ be as in (\ref{sigma_n}). The path $(\sigma_{n_1}^r(p), e')$ is in degree $1$ for all $0\leq r < n_1$, since it contains $(a, e')$. Thus, for any vertex $\varepsilon\in Q^1_0$ such that $\varepsilon = h(\sigma_{n_1}^r(p))$ for some $r$, the path $(\varepsilon, q)(\sigma_{n_1}^r(p), e')$ is in degree $1$. Therefore $(\varepsilon, q)$ is in degree $0$ for any vertex $\varepsilon$ in the path $p$. By the same reasoning, for every vertex $\varepsilon'$ in $q$, the path $(p, \varepsilon')$ is in degree $1$. \\

\noindent \underline{\emph{Step 2}}: Now consider a vertex ${\ell}\in Q^2_0$ which is not in the path $q$. We claim that the path $(p, \ell)$ is also in degree $1$. If $i\to j$ is an arrow, we use the symbol $i\sharp j$ to denote the underlying edge. Consider a non-oriented path 
\[
	(e, f_1) \sharp (e, f_2) \sharp \cdots \sharp (e, \ell),
\]
where $f_1\in Q^2_0$ is a vertex in the path $q$. Such a path exists since the quiver is connected. Recall that the path $(p, f_1): (e, f_1)\to (e, f_1)$ is in degree $1$ by the previous step. By Lemma \ref{lem:arrow_cont}, the arrow between $(e, f_1)$ and $(e, f_2)$ is part of a closed summand of $(e,\omega_2)$, call it $(e, q')$. Then $(e, \sigma_{n_2}^s(q'))(p, f_1)$ is a closed summand of $\omega$ in degree $1$, where $s$ is chosen so that $t(\sigma_{n_2}^s(q')) = f_1$. Hence $(e, \sigma_{n_2}^s(q'))$ has degree $0$, and so does $(e, \sigma_{n_2}^{r}(q'))$ for any $0\leq r<n_2$, since it contains the same arrows. Thus, the cycle $(p, f_2)$ must be in degree $1$, because $(e, \sigma_{n_2}^{s'}(q'))(p, f_2)$, where $s'$ is chosen such that $t(\sigma_{n_2}^{s'}(q')) = f_2$, is a summand of $\omega$ in degree $1$. Continuing in this fashion, we see that the path $(p, \ell)$ is in degree $1$. Thus, any path of the form $(p, \varepsilon')$, where $\varepsilon'\in Q^2_0$, is in degree $1$. By the same reasoning, any path of the form $(\varepsilon, q)$, where $\varepsilon\in Q$ is in degree $0$. \\

\noindent \underline{\emph{Step 3}}: Now consider another closed summand $\varrho$ of $\omega_1$ ending at a vertex $\varepsilon$. The closed summand $(\varepsilon, q)(\varrho, e')$ of $\omega$ is in degree $1$. By the previous step, $(\varepsilon, q)$ is in degree $0$, so $(\varrho, e')$ must be in degree $1$. This implies that $(\varrho, \varepsilon')$ is in degree $1$ for any vertex $\varepsilon'\in Q^2_0$. Similarly, we get that any $(\varepsilon, \varrho')$ is in degree $0$ for any summand $\varrho'$ of $\omega_2$ and vertex $\varepsilon\in Q^1_0$. \\

\noindent \underline{\emph{Step 4}}: It remains to show that if $(a,e)$ is the arrow in $(p, e)$ in degree $1$, then $(a, \varepsilon')$ is in degree $1$ for any $\varepsilon'\in Q^2_0$. Say $a:i\to j$ in $Q^1$. Consider as before a non-oriented path 
\[
	(i, e) \sharp (i, f_1) \sharp \cdots \sharp (i, \varepsilon')
\]	
and the parallel non-oriented path
\[
	(j, e) \sharp (j, f_1) \sharp \cdots \sharp (j, \varepsilon').
\]
Then, consider an arrow $b_1\in A^2$ between $e$ and $f_1$, say, without loss of generality, that $b_1: e\to f_1$. Then by the commutativity relations on the tensor product, we have
\[
	(a, f_1)(i, b_1) = (j, b_1)(a, e).
\]
Because $b_1\in A^2$, the arrows $(i, b_1)$ and $(j, b_1)$ are in degree $0$. In fact, we proved in step 3 that $\deg_{T_S V}(\varepsilon, \omega_2) = 0$ for any vertex $\varepsilon\in Q^1_0$. Moreover, $(a,e)$ is in degree $1$ by assumption. So $(a, f_1)$ is in degree $1$ by homogeneity of the relations. Now, consider $b_2\in A^2$ between $f_1$ and $f_2$, say $b_2: f_1\to f_2$. By the same reasoning, 
\[
	(a, f_2)(i, b_2) = (j, b_2)(a, f_1),
\]
so $(a, f_2)$ is in degree $1$. Continuing in this way, we obtain that $(a, \varepsilon')$ is in degree $1$.
\end{proof}

We conclude this section by showing that being a higher preprojective algebra is a property which is invariant under Morita equivalence. This implies that Theorem \ref{no_tensor} holds also true for algebras that are not basic.  

\begin{proposition}
\label{thm:preproj_morita}
Let $A$ and $B$ be two Morita equivalent algebras. Then $A$ is a higher preprojective algebra if and only if $B$ is a higher preprojective algebra. 
\end{proposition}

\begin{proof}
By Morita theory, there exists progenerators $P:=fA^d$ and $Q:= A^d f$ such that
\[
	B\cong \End_A(P)\cong P\otimes_A Q\quad\text{and}\quad A\cong Q\otimes_B P,
\]
for some integer $d\geq 1$ and full idempotent $f\in A$. Then there is an equivalence of categories:  
\[
	P\otimes_A -: \mathsf{Mod}\, A\xrightarrow{\sim} \mathsf{Mod}\, B.
\]
Suppose that $A$ is a higher preprojective algebra, that is, there exists an algebra $\Lambda\subset A$ which is $n$-representation-infinite such that 
\[
	A \cong T_{\Lambda}\Ext_{\Lambda}^n(D\Lambda, \Lambda).
\]
Because $\Lambda = A_0$ is the degree $0$ part of $A$ with respect to the tensor algebra grading, we must have that $f$ is a full idempotent in $\Lambda$. Thus there is a Morita equivalence 
\[
	\mathcal F := \bar P\otimes_{\Lambda}-\otimes_{\Lambda} \bar Q:\mathsf{Bimod}\, \Lambda\xrightarrow{\sim}\mathsf{Bimod}\, \beta
\]  
where $\bar P:= f\Lambda^d$, $\bar Q:=\Lambda^d f$ and $\beta := \End_{\Lambda}(\bar P)\cong\bar P\otimes_{\Lambda} \bar Q\cong \mathcal F(\Lambda)$ . The functor $\mathcal F$ commutes with the $\Lambda$-bimodule $D\Lambda=\Hom_k(\Lambda,k)$. In fact, 
\begin{align*}
\mathcal F(\Hom_k(\Lambda,k)) &= f\Lambda^d\otimes_{\Lambda} \Hom_k(\Lambda,k)\otimes_{\Lambda} \Lambda^d f\\
&\cong f\Hom_k(\mathcal M_d(\Lambda), k) f\\
&\cong \Hom_k(f\mathcal M_d(\Lambda) f, k)\\
&\cong \Hom_k(\mathcal \beta, k),
\end{align*}
where $\mathcal M_d(\Lambda)$ is the matrix algebra over $\Lambda$. Since $\mathcal F$ is an equivalence of categories which commutes with $k$-duality, we have 
\[
	\mathcal F(\Ext^n_{\Lambda}(D\Lambda, \Lambda)) \cong \Ext^n_{\beta}(D\beta, \beta).
\]
Moreover, for any $N, M\in \mathsf{Bimod}\, \Lambda$  there is an isomorphism of bimodules 
\begin{align*}
	\mathcal F(M\otimes_{\Lambda} N) &= \bar P\otimes_{\Lambda} M\otimes_{\Lambda} N\otimes_{\Lambda} \bar Q\\
	&\cong  \bar P\otimes_{\Lambda} M\otimes_{\Lambda}\Lambda\otimes_{\Lambda} N\otimes_{\Lambda} \bar Q\\
	&\cong  \bar P\otimes_{\Lambda} M\otimes_{\Lambda}\bar Q\otimes_{\beta}\bar P\otimes_{\Lambda} N\otimes_{\Lambda} \bar Q\\
	&\cong \mathcal F(M)\otimes_{\beta}\mathcal F(N).
\end{align*}
Finally, $\mathcal F$ commutes with direct sums, since it is an equivalence. Combining these facts, we obtain
\[
	B\cong \mathcal F(A) \cong T_{\beta}\Ext_{\beta}^n(D\beta, \beta).
\]
Since Morita equivalences preserve the global dimension and finite-dimensionality, we have that $\beta$ is a finite-dimensional algebra of global dimension $n$. Similarly to what we showed above, $\mathcal F$ induces a derived equivalence $\mathcal F: \DD(\Mod \Lambda^e)\xrightarrow{\sim} \DD(\Mod \beta^e)$ which commutes with the inverse Serre functor $\Se^{-1}=\RHom_{\Lambda}(D\Lambda, -)$. Thus 
\[
	\mathcal F(\mathbb S_n^{-\ell}(\Lambda))\cong \mathbb S_n^{-\ell}(\beta),
\]	 
for all $\ell\geq 0$. Since $\Lambda$ is $n$-representation-infinite, $S_n^{-\ell}(\Lambda)$ is concentrated in degree $0$, so $\beta$ is $n$-representation-infinite as well. Therefore, $B$ is a higher preprojective algebra. 
\end{proof}

\begin{corollary}
\label{cor:no_tensor}
If $A = A^1\otimesk A^2$ is the tensor product of two bimodule Calabi--Yau Koszul algebras, then $A$ does not admit the structure of a higher preprojective algebra. 
\end{corollary}

\begin{proof}
For convenience, we first give a proof that the bimodule Calabi--Yau and the Koszulity properties are preserved under Morita equivalence, but this is well-known. Let $B$ and $C$ be two Morita equivalent algebras. Then there exists an equivalence of categories 
\[
	\mathcal F: \Mod B^e\xrightarrow{\sim} \Mod C^e
\]
such that $\mathcal F(B) \cong C$. The functor $\mathcal F$ induces an equivalence 
\[
 	\mathcal F: \DD(\Mod B^e)\xrightarrow{\sim} \DD(\Mod C^e)
\]
which restricts to 
\[
	\tilde {\mathcal F}: \mathsf{per}\,B^e\xrightarrow{\sim} \mathsf{per} \, C^e.
\]
Suppose that $B$ is $n$-bimodule Calabi--Yau. Then $B\in\mathsf{per}\,B^e$, so $C \cong  \tilde{\mathcal F}(B) \in \mathsf{per} \, C^e$ as well. Now the Calabi--Yau duality property (\ref{self_dual})
 of $C$ follows from \cite[Remark 3.4.2]{Gin06}. In fact, applying Van den Bergh duality (\cite[Theorem 1]{VdB98}), we get that the Calabi--Yau property is equivalent to 
\[
	\Tor_{\bullet}^{B^e} (B, -)\cong \Ext_{B^e}^{d-\bullet}(B,-).
\]
Since $\Tor_{\bullet}^{B^e}$ and $\Ext_{B^e}^{\bullet}$ are preserved under equivalences and $\mathcal F(B)\cong C$, we obtain that this property is invariant under Morita equivalence. \\

Now, if $B=T_S(V)/\langle M\rangle$ is Koszul, then $B$ admits a graded bimodule projective resolution $P_{\bullet}$ such that each term $P_{\ell} = B\otimes_S K_{\ell} \otimes_S B$ is generated in degree $\ell$. We use arguments similar to the proof of Proposition \ref{thm:preproj_morita}. The functor $\mathcal F$ induces an equivalence 
\[
	\bar{\mathcal F}: \mathsf{Bimod}\, S\to \mathsf{Bimod}\, \bar S,
\]
where $\bar S = \bar{\mathcal F}(S)$ is a semisimple finite-dimensional algebra. Then 
\[
	C\cong T_{\bar S} \bar{\mathcal F}(V)/\langle \bar{\mathcal F}(M)\rangle
\]
is Koszul, since $\bar{\mathcal F}(P_{\bullet})$ is a bimodule projective resolution of $C$ where $\bar{\mathcal F}(P_{\ell})\cong C\otimes_{\bar S} \bar{\mathcal F}(K_{\ell})\otimes_{\bar S} C$ is generated in degree $\ell$.\\

Let $(-)_b$ denote the basic algebra Morita equivalent to $(-)$. Morita equivalences preserve tensor products over $k$, so we have
\[
	A_b \cong A^1_b\otimesk A^2_b. 
\]
We showed that the basic algebras $A^1_b$ and $A^2_b$ are Koszul bimodule Calabi--Yau. Thus, applying Theorem \ref{no_tensor}, we obtain that $A_b$ does not admit the grading structure of a preprojective algebra. Hence, by Proposition \ref{thm:preproj_morita}, $A$ is not a higher preprojective algebra. 
\end{proof}


\section{Preprojective algebra structure on skew-group algebras}

Our original motivation was to determine whether we can generalize the following theorem, established in \cite{RVdB89} (see also \cite[Corollary 3.5]{CBH98}), in the context of higher Auslander--Reiten theory. 

\begin{theorem}[{\cite[Proof of Proposition 2.13]{RVdB89}}]
\label{thm:RVdB}
Let $G$ be a finite subgroup of $SL(2,k)$. Then the skew-group algebra $k[x,y]\#G$ is Morita equivalent to $\Pi(k\Delta_G)$, the preprojective algebra over the extended Coxeter-Dynkin quiver $\Delta_G$ associated to $G$ via the McKay correspondence.
\end{theorem}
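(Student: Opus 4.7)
The plan is to construct an explicit Morita equivalence by identifying a basic algebra Morita equivalent to $R\#G$ (with $R = k[x,y]$) and matching it with $\Pi(Q_G)$. For each irreducible representation $V_0, V_1, \ldots, V_r$ of $G$, pick a primitive idempotent $e_i \in kG \subset R\#G$ whose associated indecomposable projective is $(R\#G)e_i \cong R\otimes V_i$. Setting $e = \sum_i e_i$, the algebra $e(R\#G)e$ is basic and Morita equivalent to $R\#G$, and standard computation yields
\[
	e_j(R\#G)e_i \;\cong\; \Hom_G(V_j,\, R\otimes V_i).
\]
The vertices of the quiver of $e(R\#G)e$ are therefore indexed by $\mathrm{Irr}(G)$, and the arrows $i \to j$ form a basis of the degree-one piece $\Hom_G(V_j, V\otimes V_i)$, where $V$ is the defining two-dimensional representation of $G$. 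Since $G \subset SL(2,k)$, the volume form on $V$ exhibits $V \cong V^\vee$ as $G$-representations, so $\dim \Hom_G(V_j, V\otimes V_i) = \dim \Hom_G(V_i, V\otimes V_j)$. By the McKay correspondence, these dimensions are the adjacency entries of the extended Coxeter-Dynkin quiver $Q_G$, and the resulting symmetry produces precisely the double $\bar Q_G$.

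Next, I would define a graded algebra homomorphism $\varphi: k\bar Q_G \to e(R\#G)e$ by sending each arrow to its chosen basis element. The commutation relation $xy - yx = 0$ in $R$ is equivalent to the vanishing in $R$ of the antisymmetric tensors $\Lambda^2 V \subset V\otimes V$, and the representation $\Lambda^2 V \cong k$ is trivial precisely because $G \subset SL(2,k)$. Translating this unique $G$-invariant relation through the decomposition of $V\otimes V\otimes V_i$ yields, at each vertex $i$, a scalar multiple of the preprojective relation $\sum_{a} [a, a^*]$ in $e_i(R\#G)e_i$. Hence $\varphi$ factors through a surjection $\Pi(Q_G) \twoheadrightarrow e(R\#G)e$.

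To conclude that $\varphi$ is an isomorphism, I would compare graded dimensions on both sides. Both algebras are bimodule $2$-Calabi-Yau of Gorenstein parameter $1$: for $\Pi(Q_G)$ this is classical, and for $e(R\#G)e$ it descends from $k[x,y]$ being $2$-Calabi-Yau via Morita invariance and the skew-group construction (using $\mathrm{char}\, k = 0$, so that $|G|$ is invertible). The graded dimension of $e_j(R\#G)e_i$ in degree $\ell$ equals the multiplicity of $V_j$ in $S^\ell V\otimes V_i$, computable by a Molien-type character formula, while the graded dimension of $e_j \Pi(Q_G) e_i$ is determined by the affine Cartan matrix through the Koszul resolution of $\Pi(Q_G)$. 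Matching these two expressions forces $\varphi$ to be injective, and hence an isomorphism.

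The principal obstacle is exactly this final dimension-count: ruling out extra relations beyond the preprojective one. The equality of the two Hilbert series amounts, in essence, to the combinatorial McKay correspondence identifying the representation ring of $G$ with the root lattice of the associated affine Dynkin diagram, and it is here that the hypothesis $G \subset SL(2,k)$ does the decisive work --- without it, both the $2$-Calabi-Yau property of $R\#G$ and the triviality of $\Lambda^2 V$ would fail.
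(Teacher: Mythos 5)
The paper does not prove this statement: it is quoted from \cite{RVdB89} as background motivation for Section 5, so there is no internal proof to compare yours against. Your outline is the standard ``basic algebra'' argument (essentially the route of Crawley-Boevey--Holland, rather than the Auslander--Reiten-theoretic route of Reiten--Van den Bergh themselves), and its skeleton is sound: $e(R\#G)e$ with $e=\sum e_i$ is basic and Morita equivalent to $R\#G$; the degree-one piece $\Hom_G(V_j,V\otimes V_i)$ recovers the McKay quiver, which is the double $\bar Q_G$ because $V\cong V^\vee$ for $G<SL(2,k)$; and the single commutation relation spans $\Lambda^2V$, which is the trivial representation exactly because $\det=1$, so it pushes forward to one relation per vertex. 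This matches the framework the paper does use elsewhere (e.g.\ the superpotential $\omega=x_n\wedge\cdots\wedge x_1$ in Theorem \ref{resolution} and the discussion of $eR\#Ge$ after it). An alternative, more in the spirit of this paper, would be to skip the Hilbert-series comparison entirely: exhibit a cut of the McKay quiver putting $e(R\#G)e$ in Gorenstein parameter $1$ with degree-zero part the hereditary algebra $kQ_G$, and then invoke the bijection of Theorem \ref{structure} to conclude $e(R\#G)e\cong\Pi(kQ_G)$ directly.

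Two soft spots in your write-up deserve attention. First, translating the invariant of $\Lambda^2V$ into the relation $\sum_a[a,a^*]$ requires choosing the dual bases (equivalently, an orientation of $Q_G$ and a rescaling of arrows) so that the signs come out as $aa^*-a^*a$ rather than $\sum_a\epsilon_a\,aa^*$ with unspecified signs $\epsilon_a=\pm1$; this is routine but should be said. Second, and more seriously, your final dimension count takes the Hilbert series of $\Pi(Q_G)$ as ``determined by the affine Cartan matrix through the Koszul resolution,'' but Koszulity of the preprojective algebra of an extended Dynkin quiver is itself a nontrivial theorem whose usual proof goes \emph{through} this Morita equivalence, so as stated the argument risks circularity. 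You need an independent input here: either the Malkin--Ostrik--Vybornov computation of the Hilbert series $(1-Ct+t^2)^{-1}$ for non-Dynkin quivers, or a direct spanning argument giving the upper bound $\dim\Pi(Q_G)_\ell\le\dim\bigl(e(R\#G)e\bigr)_\ell$, combined with Kostant's identity $\sum_\ell[S^\ell V]t^\ell=(1-[V]t+t^2)^{-1}$ on the skew-group side.
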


Throughout this section, let $R = k[x_1,\ldots, x_n] = k[V]$ be the polynomial ring in $n$ variables and let $G$ be a finite subgroup of $SL(n, k)$ acting on $R$ by linear change of coordinates. The \emph{skew-group algebra} $R\#G$ is defined as follows:  
\[
	R\#G = R\otimesk kG
\]
as $kG$-modules, and the multplication is given by
\[
	(v_1, g_1)\cdot (v_2, g_2) = (v_1 g_1(v_2), g_1g_2),
\]
where $v_i\in R$ and $g_i\in G$, $i=1,2$. Note that, as described for example in \cite{BSW10}, the skew-group algebra $R\#G$ is isomorphic to
\[
	T_{kG}(V\otimesk kG)/\langle M\otimesk kG\rangle,
\]
where $M$ is the space of commutativity relations in $R = k[V]$, and the bimodule action of $kG$ on $V\otimesk kG$ is given by 
\[
	g_1(v\otimesk h) g_2 = (g_1 v\otimesk  g_1hg_2).
\]

It is natural to ask whether a generalization of Theorem \ref{thm:RVdB} holds, that is, if the skew-group algebra $R\#G$ is Morita equivalent to a higher preprojective algebra for any finite $G<SL(n, k)$. The goal of this section is to give a negative answer to this question.  Namely, we show that if $G$ embeds into a product of special linear groups (Definition \ref{dfn:embed}), then $R\#G$ is not Morita equivalent to a higher preprojective algebra. In the case where $G$ is cyclic, this provides a partial converse to a theorem by Amiot, Iyama and Reiten (cf. Theorem \ref{AIR}). \\

Note that $R\#G$ is always bimodule $n$-Calabi--Yau and Koszul. 

\begin{lemma}[{\cite[Lemma 6.1]{BSW10}}]
\label{lem:resolution}
Let $G<SL(n, k)$ be finite, $R = k[x_1,\ldots, x_n] = k[V]$ and $A:= R\#G$. Then $A$ is bimodule $n$-Calabi--Yau and Koszul. The minimal projective $A$-bimodule resolution of $A$ is given by 
\[
	P_\bullet = A\otimes_{k G}\left(\bigwedge\nolimits^{n} V\otimes_{k}k G\right)\otimes_{k G}A\to A\otimes_{k G}\left(\bigwedge\nolimits^{n-1} V\otimes_{k}k G\right)\otimes_{k G}A
	\to\cdots\to A\otimes_{k G}A
\]
and satisfies $P_{\bullet}\cong P_{\bullet}^{\vee}[n]$, where $(-)^{\vee} = \Hom_{A^e}(-, A^e)$. 
\end{lemma}

Thus the difficulty lies in finding a grading that gives $R\#G$ Gorenstein parameter $1$. Again, this grading must have the property that the degree $0$ part is finite-dimensional, as we want it to be $(n-1)$-representation-infinite. It cannot be induced by putting the variables $x_1, \ldots, x_n$ in $R$ in some degree $a_1, \ldots, a_n$, respectively, and the elements of $G$ in degree $0$, since in this case the Gorenstein parameter would be $\sum_{1\leq i \leq n} a_i$. \\

The algebra structure of $R\#G$ is related to the McKay quiver of $G$. 

\begin{definition}[\cite{Mck80}]
Let $G$ be a finite subgroup of $GL(n, k)$. The \emph{McKay quiver} $Q_G$ of $G$ has a vertex set in which each element corresponds to an irreducible representation $\rho$ of $G$. Let $V$ be the \emph{given representation} of dimension $n$, that is, the representation given by the inclusion  $G\xhookrightarrow{}GL(n, k)$. In addition, let $S_{\rho}$  be the simple $kG$-module corresponding to $\rho$. The number of arrows $\rho_1\to\rho_2$ in $Q_G$ is equal to 
\[
	\dim_k(\Hom_{kG}(S_{\rho_2}, V\otimesk S_{\rho_1})).
\]
\end{definition}

\begin{theorem}[See, e.g., {\cite[Theorem 1.8]{GMV02}}]
\label{GMV}
Let $G$ be a finite subgroup of $GL(n, k)$, then $R\#G$ is Morita equivalent to $kQ_G/\langle M \rangle$, where $Q_G$ is the McKay quiver of $G$ and $\langle M \rangle$ is the $kG$-bimodule of relations induced from the commutativity relations in $R$. 
\end{theorem}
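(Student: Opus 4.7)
The plan is to establish the Morita equivalence by analyzing $R\#G$ via idempotents coming from the group algebra. Since $k$ has characteristic zero and $G$ is finite, $kG$ is semisimple, so I can pick primitive orthogonal idempotents $\{e_\rho\}$ in $kG$, one for each irreducible $G$-representation $\rho$, satisfying $kG\cdot e_\rho\cong S_\rho$. Setting $e:=\sum_\rho e_\rho$, one checks that $e$ is a full idempotent of $R\#G$: it is already full in the subalgebra $kG$ (since each simple block $M_{n_\rho}(k)$ satisfies $M_{n_\rho}(k)\,e_\rho\,M_{n_\rho}(k)=M_{n_\rho}(k)$), so $(R\#G)\,e\,(R\#G)\supseteq kG \ni 1$. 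Hence $R\#G$ is Morita equivalent to the basic algebra $B:=e(R\#G)e$, and the $\{e_\rho\}$ form a complete set of primitive orthogonal idempotents in $B$. The vertex set of the Gabriel quiver of $B$ is therefore canonically in bijection with $\mathrm{Irr}(G)$, matching the vertex set of $M_G$.

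To identify the arrows, I would compute the corners $e_{\rho_1}(R\#G)e_{\rho_2}$. Using that $R\#G$ is free as a right $kG$-module (any $k$-basis of $R$ is a basis), the indecomposable projective $P_\rho := (R\#G)e_\rho$ is isomorphic to $(R\#G)\otimes_{kG}S_\rho$, and the induction-restriction adjunction yields
\[
	e_{\rho_1}(R\#G)e_{\rho_2}\;\cong\;\Hom_{R\#G}(P_{\rho_1},P_{\rho_2})\;\cong\;\Hom_{kG}\bigl(S_{\rho_1},\,R\otimesk S_{\rho_2}\bigr),
\]
with $R$ carrying the natural $G$-action. Decomposing $R=\bigoplus_{d\geq 0}\operatorname{Sym}^d V$ as a $kG$-module, the degree-one piece is $\Hom_{kG}(S_{\rho_1},\,V\otimesk S_{\rho_2})$, whose dimension is, by the very definition of the McKay quiver, the number of arrows $\rho_2\to\rho_1$ in $M_G$. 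These match the degree-one generators of the path algebra $kM_G$ under the convention $a:i\to j \in e_j(kM_G)e_i$, so the generators of $B$ are exactly the arrows of $M_G$.

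Finally, I would extract the relations. The higher symmetric powers $\operatorname{Sym}^d V$ arise as quotients of $V^{\otimes d}$ by the commutativity relations $x_ix_j-x_jx_i$ in $R$. Tensoring these relations equivariantly with $kG$ produces a $kG$-bimodule of quadratic relations inside $V\otimesk kG$, which under the above identification becomes precisely the bimodule $M$ of relations in $kM_G$ described in the statement. Combining the three steps gives $B\cong kM_G/\langle M\rangle$, hence the claimed Morita equivalence.

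The main subtlety lies in tracking the $kG$-bimodule structure on $R\#G$: the left action twists $R$ by $G$ while the right action fixes $R$, so the $kG$-bilinear isomorphism $R\#G\cong R\otimesk kG$ is not the one induced by trivial actions. One must handle this asymmetry carefully when applying the induction-restriction adjunction and when verifying that the arrow orientations in $M_G$ match the multiplication convention in $B$; beyond that, everything is formal manipulation of projectives over a semisimple base.
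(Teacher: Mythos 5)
The paper does not actually prove this statement: it is quoted as a known result with a pointer to \cite{GMV02}, so there is no in-text argument to compare yours against. On its own terms, your proof is correct and follows the standard route. The fullness of $e=\sum_\rho e_\rho$ in $R\#G$, the Morita reduction to the corner algebra $e(R\#G)e$, and the identification
\[
	e_{\rho_1}(R\#G)e_{\rho_2}\cong \Hom_{kG}\bigl(S_{\rho_1},\,R\otimesk S_{\rho_2}\bigr)
\]
via $P_\rho\cong (R\#G)\otimes_{kG}S_\rho$ and induction--restriction are all sound, and the degree-one piece does recover the arrow counts of $M_G$ with orientations consistent with the corner multiplication. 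Two points deserve tightening. First, the relations live in the degree-two component $e(V\otimesk kG)\otimes_{kG}(V\otimesk kG)e$, not ``inside $V\otimesk kG$'' as you wrote; presumably a slip. Second, the step ``the relations become precisely the bimodule $M$'' silently uses that the kernel of $T_{kG}(V\otimesk kG)\to R\#G$ is generated in degree $2$ (this follows from $R$ being quadratic and $-\otimesk kG$ being exact) \emph{and} that the corner functor $e(-)e$ for a full idempotent over the semisimple base $kG$ carries a presentation $T_{kG}W/\langle N\rangle$ to $T_{ekGe}(eWe)/\langle eNe\rangle$; the latter is exactly the lemma the paper later invokes from \cite{BSW10}, and it is the one genuinely non-formal ingredient, so it should be stated or cited rather than absorbed into ``formal manipulation.'' You are also right to flag the left/right asymmetry of the $kG$-action on $R\#G$; your diagonal-action identification of $(R\#G)\otimes_{kG}S_\rho$ with $R\otimesk S_\rho$ handles it correctly.
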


When $G$ is abelian, the Morita equivalence is an isomorphism, since $R\#G$ is basic. There is then an explicit correspondence between the arrows in $Q_G$ and the variables in $R$. In this case, we have a description of the McKay quiver of $G$. The group is simultaneously diagonalizable, so we can assume up to conjugacy that all its elements are diagonal matrices. Let $\rho_i: G\to k^*$ be defined as $\rho_i(g)=\alpha_i$, where $\alpha_i$ is the $i$-th diagonal element of $g$, for $i = 1, \ldots, n$. Then the McKay quiver of $G$ is described as follows.

\begin{lemma}[{\cite[Definition 2.1]{CMT07b}}]
\label{lem:cyclic_mckay}
Let $G$ be an abelian finite subgroup of $GL(n, k)$. The McKay quiver $Q_G$ of $G$ has an arrow $x_i^{\rho}:\rho\to\rho\rho_i$ for each irreducible representation $\rho$ and all $i = 1,\ldots, n$. We say that $x_i^{\rho}$ is an \emph{arrow of type $x_i$}. 
\end{lemma}

The relations are then induced from the relations in $R$. That is, the ideal $\langle M\rangle$ from Theorem \ref{GMV} is given by 
\[
	\langle\{x_i^{\rho\rho_{i'}}x_{i'}^{\rho} - x_{i'}^{\rho\rho_i}x_i^{\rho}\,\, |\,\, 1\leq i\not = i'\leq n\,\,, \rho\mbox{ irreducible} \}\rangle.
\]

If $G$ is a cyclic group of order $r$, then it is generated up to conjugacy by a diagonal matrix $\frac{1}{r} (a_1, \ldots, a_n)$ whose non-zero entries are $\xi^{a_1}, \ldots, \xi^{a_n}$, where $\xi$ is a primitive $r$-th root of unity and $0\leq a_i<r$. We denote this group by 
\[
	G = \langle\frac{1}{r} (a_1, \ldots, a_n)\rangle.
\]


\subsection{Tensor product of skew-group algebras}

We apply the results of last section to the current context. First assume that $G_i$ is a finite subgroup of $SL(n_i, k)$ and $R_i$ is the polynomial ring in $n_i$ variables, for $i = 1, 2$. Then we have the following $k$-module isomorphisms:
\[
	R_1\#G_1\otimesk R_2\#G_2 \cong (R_1\otimesk kG_1)\otimesk (R_2\otimesk kG_2)
	\cong (R_1\otimesk R_2) \otimesk (kG_1\otimesk kG_2)\cong R\otimesk k[G_1\times G_2],
\]
where $R$ is the polynomial ring in $(n: = n_1+ n_2)$ variables. The map is given by 
\[
	(f(v), g_1)\otimesk(g(w), g_2)\mapsto (f(v)\otimesk g(w), (g_1, g_2))
\]
and induces an algebra isomorphism $R_1\#G_1\otimesk R_2\#G_2 \cong R\#(G_1\times G_2)$. Note that $R\#(G_1\times G_2)$ is Morita equivalent to 
\[
	kQ_{G_1}/\langle M_1 \rangle \otimesk kQ_{G_2}/\langle M_2 \rangle, 
\]
where $Q_{G_i}$ is the McKay quiver of $G_i$ and $M_i$ is a set of relations induced from the relations in $R_i$, for $i =1,2$. The following corollary is a particular case of Corollary \ref{cor:no_tensor}.

\begin{corollary}
\label{coro_tensor}
If $G<SL(n, k)$ is a finite group such that $G$ is conjugate to $G_1\times G_2$, where $G_i<SL(n_i, k)$ for some $n_i\geq 1$ such that $n_1+n_2 = n$, then $R\#G$ is not Morita equivalent to a higher preprojective algebra. 
\end{corollary}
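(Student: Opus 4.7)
The plan is to assemble ingredients already established in the section and reduce to Theorem~\ref{no_tensor}. Since conjugation in $GL(n,k)$ induces an algebra isomorphism of skew-group algebras, we may assume without loss of generality that $G = G_1 \times G_2$ as a subgroup of $SL(n_1, k) \times SL(n_2, k) \subset SL(n, k)$. Then the algebra isomorphism $R\#G \cong (R_1\#G_1) \otimes_k (R_2\#G_2)$ displayed just above the statement is available. Invoking the Morita equivalence $R_i\#G_i \sim \Pi_i := kM_{G_i}/\langle M_i \rangle$ coming from the McKay description, and using that Morita equivalence is compatible with tensor products over $k$, we conclude that $R\#G$ is Morita equivalent to $\Pi_1 \otimes_k \Pi_2$.

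Next I would verify that $\Pi_1$ and $\Pi_2$ satisfy the hypotheses of Theorem~\ref{no_tensor}: each is basic by construction, and each is bimodule $n_i$-Calabi-Yau since $G_i < SL(n_i, k)$ makes $R_i\#G_i$ bimodule $n_i$-Calabi-Yau and this property is preserved under Morita equivalence. Moreover, since $R_i \cong k[V_i]$ is Koszul with $V_i$ in degree $1$, the tensor algebra presentation $R_i\#G_i \cong T_{kG_i}(V_i\otimes_k kG_i)/\langle M_i \otimes_k kG_i \rangle$ descends to a Koszul presentation of $\Pi_i$, and this Koszul grading is precisely the grading that realizes $\Pi_i$ as a bimodule Calabi-Yau algebra of Gorenstein parameter $1$.

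Now suppose for contradiction that $R\#G$ were Morita equivalent to some preprojective algebra. Then so would $\Pi_1 \otimes_k \Pi_2$ be, and the corresponding basic algebra would carry a grading making it bimodule $(n_1+n_2)$-Calabi-Yau of Gorenstein parameter $1$ with \emph{finite-dimensional} degree $0$ part (as required for a preprojective algebra over a finite-dimensional base). But Theorem~\ref{no_tensor} asserts that any such grading on $\Pi_1 \otimes_k \Pi_2$ forces the degree $0$ part to be of the form $A_1 \otimes_k A_2$, where one of the factors is itself bimodule Calabi-Yau and hence infinite-dimensional. This contradicts the finite-dimensionality requirement, completing the proof.

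The main obstacle in this argument is not the final contradiction, which is immediate once the machinery is in place, but rather the preliminary bookkeeping: one must check that passing to the basic algebra $\Pi_i$ preserves both the Koszul property and the Calabi-Yau structure together with the Gorenstein parameter $1$ grading, and that Morita equivalence of ungraded algebras is compatible with the existence of such a grading. Both points follow from standard properties of the bar/Koszul resolutions under Morita equivalence combined with the explicit McKay-quiver description of $\Pi_i$, but they are what licenses the reduction from $R\#G$ to the tensor product of Koszul preprojective algebras where Theorem~\ref{no_tensor} applies.
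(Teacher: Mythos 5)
Your argument follows the paper's route exactly: conjugate so that $G=G_1\times G_2$, use the isomorphism $R\#G\cong(R_1\#G_1)\otimes_k(R_2\#G_2)$, pass to the Morita-equivalent tensor product of basic McKay-quiver algebras $\Pi_1\otimes_k\Pi_2$, and invoke Theorem~\ref{no_tensor}; the paper's own proof is essentially the single remark that Morita equivalence preserves the bimodule Calabi--Yau property.

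One assertion in your hypothesis-checking is incorrect, although it does not sink the argument. You claim that the Koszul grading on $\Pi_i$ (arrows of the McKay quiver in degree $1$) ``is precisely the grading that realizes $\Pi_i$ as a bimodule Calabi--Yau algebra of Gorenstein parameter $1$.'' With that grading the top term of the Koszul bimodule resolution, $\Pi_i\otimes_{kG_i}(\Lambda^{n_i}V_i\otimes_k kG_i)\otimes_{kG_i}\Pi_i$, is generated in degree $n_i$, so the Gorenstein parameter is $n_i$, not $1$; and a general $G_i<SL(n_i,k)$ need not admit \emph{any} Gorenstein-parameter-$1$ grading -- that is the very phenomenon the section is about. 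What saves the reduction is that the ``Gorenstein parameter $1$'' clause in the standing hypotheses of Theorem~\ref{no_tensor} is never used in its proof: only basicness, Koszulity, the $n_i$-Calabi--Yau property, and connectedness of the quivers enter, and the paper itself applies the theorem to the $\Pi_i$ without checking that clause. You should therefore either delete the false claim or note explicitly that the Gorenstein-parameter hypothesis on the factors is superfluous for Theorem~\ref{no_tensor}.
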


This corollary motivates the following definition. 

\begin{definition}
\label{dfn:embed}
Let $G<SL(n, k)$ be finite. We say that $G$ \emph{embeds} into $SL(n_1, k)\times SL(n_2, k)$, for some $n_1, n_2 \geq 1$ such that $n_1+ n_2 = n$, if $G$ is conjugate to a group in which each element is of the form
\[
\left( \begin{array}{cc}
g_1 & 0  \\
0 & g_2  \end{array} \right), 
\]
where $g_1\in SL(n_1, k)$ and $g_2\in SL(n_2, k)$. Equivalently, the given representation $V$ of $G$ can be decomposed as 
\[
	V\cong V_1\oplus V_2, 
\]
where $V_i: G\to SL(n_i, k)$ is a faithful representation, for $i=1,2$. In particular, $\bigwedge^{n_i}V_i$ is the trivial representation. 
\end{definition}

If $\Gamma_1, \Gamma_2$ and $H$ are finite groups and $\phi_i: \Gamma_i\to H$ is an epimorphism for $i = 1,2$, define the \emph{fibre product} $\Gamma_1\times_H \Gamma_2$ as
\[
	\Gamma_1\times_H \Gamma_2: = \left\{\left(\begin{array}{cc}
g_1 & 0  \\
0 & g_2  \end{array} \right)\,\, | \,\, g_1\in \Gamma_1, \, g_2\in \Gamma_2,\, \phi_1(g_1) = \phi_2(g_2)\right\}.
\]
By Goursat's lemma, these are exactly the finite subgroups of $\Gamma_1\times \Gamma_2$. Therefore, $G$ embeds into $SL(n_1, k)\times SL(n_2, k)$ if and only if it is conjugate to a fibre product of the form $G_1\times_H G_2$, where $G_i<SL(n_i, k)$ is finite, for $i=1,2$.

\begin{remark}
\label{emb_rem}
The group $G$ from Corollary \ref{coro_tensor} embeds into $SL(n_1, k)\times SL(n_2, k)$. In the next subsection, we generalize this corollary to any finite group having this property. In this setting, the skew-group algebra is not necessarily a tensor product of skew-group algebras. 
\end{remark}


\subsection{Groups embedding in $SL(n_1, k)\times SL(n_2, k)$}

In their paper, C. Amiot, O. Iyama and I. Reiten proved the following:

\begin{theorem}[{\cite[Theorem 5.6]{AIR15}}]
\label{AIR}
Let $G$ be a finite cyclic subgroup of $SL(n, k)$ of order $r$. Suppose that there exists a generator $g= \frac{1}{r}(a_1, \ldots, a_n)$ of $G$ such that
\begin{equation}
\label{eqn:sumr}
\forall i \,\,\,0< a_i< r \quad\text{ and }\quad a_1+\cdots+ a_n = r. 
\end{equation}
Then there exists a grading on $R\#G$ endowing it with a higher preprojective algebra structure. 
\end{theorem}

\begin{remark}
The authors assume in addition that $(a_i, r ) = 1$ for all $i$. This is equivalent to $R^G$ being an isolated singularity (see \cite[Corollary 2.2]{MS84}). However, it is not needed to establish this specific result. Unless mentioned otherwise, we do not assume that we deal with isolated singularities. Note that in general the sum in condition (\ref{eqn:sumr}) is always a multiple of $r$, since $G< SL(n, k)$. It is easy to see that there always exists a generator for which the sum is equal to $r$ when $n = 2$ and $n=3$ (\cite[Corollary 5.3]{AIR15}).
\end{remark}

Using the description in Lemma \ref{lem:cyclic_mckay} of the McKay quiver $Q_G$ of a cyclic group $G$, we can label its vertices from $0$ to $r-1$. There is an arrow of type $x_j^{\ell}$ from $\ell\to \overline{\ell + a_j}$, the representative in $\{0,\ldots, r-1\}$ of $\ell + a_j\mod r$, for every vertex $\ell\in Q_G$. The grading that defines a preprojective algebra structure on $R\#G$ in Theorem \ref{AIR} is given as follows:
\[
	\mbox{deg}(x_j^{\ell}: \ell\to \overline{\ell+ a_j}) = \left\{
	\begin{array}{ll}
		1  & \mbox{if } \overline{\ell+a_j}<\ell \\
		0 & \mbox{else}.
	\end{array}
\right.
\]

\begin{example}
Let $G = \langle\frac{1}{5}(1,1,3)\rangle$. The McKay quiver of $G$ is given by: 

\begin{center}
\begin{tikzpicture}

    \tikzstyle{ann} = [fill=white,font=\footnotesize,inner sep=1pt]

\def \n {5}
\def \radius {2cm}
\def \second {0.30 cm}
\def \margin {8} 
\def \correction{90}
\def \adjust{56}

\foreach \s in {1,...,\n}
{
  \draw[decoration={markings,mark=at position 1 with {\arrow[scale=2]{>}}},
    postaction={decorate}, shorten >=0.4pt] ({360/\n * (\s - 1) + \correction/\n + \margin}:\radius) 
    to ({360/\n * (\s) + \correction/\n-\margin}:\radius); 
    
}

\foreach \s in {1,...,\n}
{
  \draw[decoration={markings,mark=at position 1 with {\arrow[scale=2]{>}}},
    postaction={decorate}, shorten >=0.4pt] ({360/\n * (\s - 1) + \correction/\n+\margin}:\radius+\second) to ({360/\n * (\s) + \correction/\n-\margin}:\radius+\second); 
}

\foreach \s in {1,...,\n}
{
  \draw[decoration={markings,mark=at position 1 with {\arrow[scale=2]{>}}},
    postaction={decorate}, shorten >=0.4pt] ({360/\n * (\s - 1) + \adjust/\n+\margin-4}:\radius-0.4cm) to ({360/\n * (\s +2)+\adjust/\n+\margin}:\radius-0.2cm);
}

\foreach \s in {0,..., 4}
{
\node[] at ({360/\n * (\s )+ \correction/\n}:\radius + 0.2 cm) {$\s$};
\node[ann] at ({360/\n * (\s+1) + \correction/\n - 180/\n}:\radius-\second) {$x_{1}^{\s}$};
 \node[ann] at ({360/\n * (\s+1) + \correction/\n - 180/\n}:\radius+ \second) {$x_{2}^{\s}$};
}
	\node[ann] at (0.35, -0.4) {$x_3^0$}; 
	\node[ann] at (0, 0.59){$x_3^2$};
	\node[ann] at (0.55, 0.2){$x_3^1$};
	\node[ann] at (-0.55, 0.2){$x_3^3$};
	\node[ann] at (-0.35, -0.4){$x_3^4$};

\end{tikzpicture}    
\end{center}

The algebra $R\# G$ is isomorphic to the path algebra over this quiver with the relations given by 
\[
	\{x_{i'}^{j+a_i} x_{i}^{j}-x_{i}^{j+a_{i'}} x_{i'}^{j}\,\, : \,\, 1\leq i\not = i'\leq 3,\,\, 0\leq j\leq 4\},
\]
where $a_1 = a_2 = 1$ and $a_3 = 3$. Since $G$ satisfies the hypotheses of Theorem \ref{AIR}, we get that $R\#G$ has a structure of preprojective algebra, where the degree $0$ part is given by the following quiver:  

\begin{center}
\begin{tikzpicture}

    \tikzstyle{ann} = [fill=white,font=\footnotesize,inner sep=1pt]

\def \n {5}
\def \radius {2cm}
\def \second {0.30 cm}
\def \margin {8} 
\def \correction{90}
\def \adjust{56}

\foreach \s in {1,...,4}
{
  \draw[decoration={markings,mark=at position 1 with {\arrow[scale=2]{>}}},
    postaction={decorate}, shorten >=0.4pt] ({360/\n * (\s - 1) + \correction/\n + \margin}:\radius) 
    to ({360/\n * (\s) + \correction/\n-\margin}:\radius); 
    
}

\foreach \s in {1,...,4}
{
  \draw[decoration={markings,mark=at position 1 with {\arrow[scale=2]{>}}},
    postaction={decorate}, shorten >=0.4pt] ({360/\n * (\s - 1) + \correction/\n+\margin}:\radius+\second) to ({360/\n * (\s) + \correction/\n-\margin}:\radius+\second); 
}

\foreach \s in {1,...,2}
{
  \draw[decoration={markings,mark=at position 1 with {\arrow[scale=2]{>}}},
    postaction={decorate}, shorten >=0.4pt] ({360/\n * (\s - 1) + \adjust/\n+\margin-2}:\radius-0.2cm) to ({360/\n * (\s +2)+\adjust/\n+\margin}:\radius-0.2cm);
}

\foreach \s in {0,..., 4}
{
\node[] at ({360/\n * (\s )+ \correction/\n}:\radius + 0.2 cm) {$\s$};
}

\foreach \s in {0,...,3}
{
\node[ann] at ({360/\n * (\s+1) + \correction/\n - 180/\n}:\radius-\second) {$x_{1}^{\s}$};
\node[ann] at ({360/\n * (\s+1) + \correction/\n - 180/\n}:\radius+ \second) {$x_{2}^{\s}$};
}
	\node[ann] at (0.3, -0.5) {$x_3^0$}; 
	\node[ann] at (0.47, 0.4){$x_3^1$};
\end{tikzpicture}    
\end{center}

\end{example}

The next lemma shows that condition (\ref{eqn:sumr}) implies that $G$ does not embed into $SL(n_1, k)\times SL(n_2, k)$. A partial converse is also given. 

\begin{lemma}
\label{condition}
Let $G$ be a finite cyclic subgroup of $SL(n, k)$ of order $r$. If the group $G$ embeds into $SL(n_1, k)\times SL(n_2, k)$ for some $n_1, n_2\geq 1$ such that $n_1+n_2 = n$, then exactly one of the following is true.
\begin{enumerate}[(a)]
\item There exists a generator $g = \frac{1}{r}(a_1, \ldots, a_n)$ of $G$ such that $a_i = 0$ for some i; \label{conda}
\item Every generator $g = \frac{1}{r}(a_1, \ldots, a_n)$ of $G$ is such that $0<a_i<r$ and the sum $a_1+\cdots+ a_n > r$. \label{condb}
\end{enumerate}

In particular, $G$ does not satisfy the hypotheses of Theorem \ref{AIR}. We have the following partial converse. If either 

\begin{itemize}
\item $G$ satisfies condition \ref{conda}; or
\item $G$ satisfies condition \ref{condb}, $n = 4$ and $R^G$ is an isolated singularity,
\end{itemize}
then $G$ embeds into $SL(n_1, k)\times SL(n_2, k)$ for some $n_1, n_2\geq 1$ such that $n_1+n_2 = n$.
\end{lemma}

\begin{proof}
Let $G$ be a group embeding into $SL(n_1, k)\times SL(n_2, k)$. Assume that \ref{conda} does not hold. Then, up to conjugacy, the following inequalities hold for every generator $g = \frac{1}{r}(a_1, \ldots, a_n)$ in $G$:  
\[
	a_1+\cdots + a_{n_1}\geq r\quad \text{ and }\quad a_{n_1+1}+\cdots a_n \geq r. 
\]
Thus $a_1+\cdots + a_n \geq 2r >r$. \\

Conversely, if \ref{conda} holds, then $G$ embeds into $SL(n-1, k)\times SL(1,k)$. Finally, if $G<SL(4, k)$ satisfies condition \ref{condb} and $R^G$ is an isolated singularity, then the claim follows from \cite[Theorem 2.4]{MS84}.
\end{proof}

Recall that $A:=R\#G$ is isomorphic to $T_{kG}(V\otimesk kG)/\langle M\rangle$. Before moving further, we describe the superpotential in ${T_{kG}(V\otimesk kG)}$. The minimal $A$-bimodule resolution of $A$ is given in Lemma \ref{lem:resolution}. Note that there is a $kG$-bimodule isomorphism
\begin{align*}
	\bigwedge\nolimits^{\ell} V\otimes_{k}k G\quad\quad&\cong\quad \bigcap_{\mu}\left( (V\otimesk kG)^{\otimes\mu}\otimes_{kG} (M\otimesk kG)\otimes_{kG} (V\otimesk kG)^{\otimes \ell-\mu-2}\right)=:K_{\ell}\\
	x_{j_{\ell}}\wedge x_{j_{\ell-1}}\wedge\cdots\wedge x_{j_1}\otimes 1&\mapsto\qquad\qquad\,\, \sum_{\tau\in\mathfrak{S}_{\ell}} (-1)^{\tau} \tau(x_{j_{\ell}}\otimes x_{j_{\ell-1}}\otimes\cdots\otimes x_{j_1})\otimes 1.
\end{align*}
where the second module is our usual description of the terms in the Koszul resolution, given in Definition \ref{koszul_complex} and the elements of the symmetric group act as always. We use this identification to shorten the notations. The $A$-bimodule generators of 
\[
	P_{\ell} := A\otimes_{k G}\left(\bigwedge\nolimits^{\ell} V\otimes_{k}k G\right)\otimes_{k G}A
\]
are given by the elements $(1,1)\otimes(x_{j_{\ell}}\wedge x_{j_{\ell-1}}\wedge\cdots\wedge x_{1}\otimes 1)\otimes (1,1)$. A superpotential in $R\# G$ is thus given by the $kG$-bimodule generator 
\[
	\omega\otimes 1\subset \bigwedge\nolimits^{n}V\otimesk kG \cong K_{n},
\]
where $\omega := x_n\wedge x_{n-1}\wedge\cdots\wedge x_1$ is the superpotential in $R$ containing the commutativity relations (see \cite[Section 3]{BSW10}). We can view $\omega\otimes 1$ as a linear combination of elements in $T_{kG}(V\otimesk kG)$, using the previous identification. We call these the \emph{summands} of $\omega\otimes 1$. By Lemma \ref{key}, the elements in $\bigwedge^{\ell}V\otimesk kG$ have the same degree as their corresponding elements in $T_{kG}(V\otimesk kG)$.\\

\begin{example}
\label{diagonal}
Let 
\[
	G = \langle\frac{1}{3}(1,2,1,2)\rangle< SL(4, k).
\]
Then 
\[
	G\cong \langle\frac{1}{3}(1,2)\rangle\times_G\langle\frac{1}{3}(1,2)\rangle \hookrightarrow SL(2, k)\times SL(2, k).
\]
We see that every generator satisfies condition \ref{condb} of Lemma \ref{condition}. The McKay quiver is given by

\begin{center}
\begin{tikzpicture}[scale=.6]
 \tikzstyle{every node}=[draw,circle,fill=black,minimum size=5pt,
                            inner sep=0pt]
       \draw [decoration={markings,mark=at position 1 with {\arrow[scale=2]{>}}},
    postaction={decorate}, shorten >=0.4pt] (0.5,0.25) -- (7.5,0.25);  
       \draw [decoration={markings,mark=at position 1 with {\arrow[scale=2]{>}}},
    postaction={decorate}, shorten >=0.4pt] (7.5,-0.25) -- (0.5,-0.25);  
       \draw [decoration={markings,mark=at position 1 with {\arrow[scale=2]{>}}},
    postaction={decorate}, shorten >=0.4pt] (7.5,0.75) -- (0.5,0.75);  
       \draw [decoration={markings,mark=at position 1 with {\arrow[scale=2]{>}}},
    postaction={decorate}, shorten >=0.4pt] (0.5,-0.75) -- (7.5,-0.75);  
    \draw [decoration={markings,mark=at position 1 with {\arrow[scale=2]{>}}},
    postaction={decorate}, shorten >=0.4pt] (8,1.5) -- (4.75,6);  
        \draw [decoration={markings,mark=at position 1 with {\arrow[scale=2]{>}}},
    postaction={decorate}, shorten >=0.4pt] (9.2,1.5) -- (5.95,6); 
        \draw [decoration={markings,mark=at position 1 with {\arrow[scale=2]{>}}},
    postaction={decorate}, shorten >=0.4pt] (4.15,6 ) -- (7.4, 1.5); 
        \draw [decoration={markings,mark=at position 1 with {\arrow[scale=2]{>}}},
    postaction={decorate}, shorten >=0.4pt] (5.35,6) -- (8.6,1.5); 
        \draw [decoration={markings,mark=at position 1 with {\arrow[scale=2]{>}}},
    postaction={decorate}, shorten >=0.4pt] (0.6,1.5) -- (3.85,6);  
        \draw [decoration={markings,mark=at position 1 with {\arrow[scale=2]{>}}},
    postaction={decorate}, shorten >=0.4pt] (-0.6,1.5) -- (2.65,6); 
        \draw [decoration={markings,mark=at position 1 with {\arrow[scale=2]{>}}},
    postaction={decorate}, shorten >=0.4pt] (3.25,6 ) -- (0, 1.5); 
        \draw [decoration={markings,mark=at position 1 with {\arrow[scale=2]{>}}},
    postaction={decorate}, shorten >=0.4pt] (2.05,6) -- (-1.2,1.5); 

 \tikzstyle{every node}=[fill=white,minimum size=5pt,
                            inner sep=0pt]

\draw (3.5, 0.25) node{$x_2^2$};
\draw (4.5, -0.25) node {$x_3^1$};
\draw (2.5, 0.75) node{$x_1^1$};
\draw (5.5, -0.75) node {$x_4^2$};

\draw (7.85, 3.5) node {$x_4^1$};
\draw (6.85, 3.9) node {$x_3^0$};
\draw (5.9, 4.4) node {$x_2^1$};
\draw (4.9, 5) node {$x_1^0$};

\draw (0.25, 3.5) node {$x_4^0$};
\draw (1.25, 4.1) node {$x_3^2$};
\draw (2.25, 4.6) node {$x_2^0$};
\draw (3.25, 5) node {$x_1^2$};

\draw (-0.5,0.2) node{$2$};
\draw (8.5,0.2) node{$1$};
\draw (4 , 7) node{$0$};

\end{tikzpicture}
\end{center}

\noindent The superpotential in $T_{kG}(V\otimesk kG)$ is given by 
\[
	 \sum_{0\leq i\leq 2}\sum_{\tau\in\mathfrak{S}_4} (-1)^{\tau} (x_{\tau^{-1}(4)}^{i+ a_{\tau^{-1}(1)}+a_{\tau^{-1}(2)}+a_{\tau^{-1}(3)}}\otimes x_{\tau^{-1}(3)}^{i+ a_{\tau^{-1}(1)}+a_{\tau^{-1}(2)}}\otimes x_{\tau^{-1}(2)}^{i+a_{\tau^{-1}(1)}}\otimes x_{\tau^{-1}(1)}^i),
\]
and corresponds to $(x_4\wedge x_3\wedge x_2\wedge x_1)\otimes 1$. Here, $a_1 = a_3 = 1$ and $a_2=a_4 = 2$.\\

Comparing with the McKay quiver of $\langle\frac{1}{3}(1,2)\rangle$: 

\begin{center}
\begin{tikzpicture}[scale=.5]
 \tikzstyle{every node}=[draw,circle,fill=black,minimum size=5pt,
                            inner sep=0pt]
       \draw [decoration={markings,mark=at position 1 with {\arrow[scale=2]{>}}},
    postaction={decorate}, shorten >=0.4pt] (1,1) -- (7.05,1);  
       \draw [decoration={markings,mark=at position 1 with {\arrow[scale=2]{>}}},
    postaction={decorate}, shorten >=0.4pt] (7.05,0.5) -- (1,0.5);  
    \draw [decoration={markings,mark=at position 1 with {\arrow[scale=2]{>}}},
    postaction={decorate}, shorten >=0.4pt] (8,1.5) -- (4.75,6);  
     \draw [decoration={markings,mark=at position 1 with {\arrow[scale=2]{>}}},
    postaction={decorate}, shorten >=0.4pt] (4.15,6 ) -- (7.4, 1.5); 
            \draw [decoration={markings,mark=at position 1 with {\arrow[scale=2]{>}}},
    postaction={decorate}, shorten >=0.4pt] (0.6,1.5) -- (3.85,6);  
       \draw [decoration={markings,mark=at position 1 with {\arrow[scale=2]{>}}},
    postaction={decorate}, shorten >=0.4pt] (3.25,6) -- (0,1.5);

 \tikzstyle{every node}=[fill=white,minimum size=5pt,
                            inner sep=0pt]
                            
  \draw (0.3,0.75) node{$2$};
\draw (7.75,0.75) node{$1$};
\draw (4 , 6.75) node{$0$};

\end{tikzpicture}
\end{center}

\noindent we see that there is a doubling of the arrows. As we will show, this property prevents $R\#G$ from having a grading structure of preprojective algebra. The key intuition here is that paths in the superpotential go twice around the same vertex. We will explain how this causes a problem when trying to put a preprojective grading structure on $R\#G$.

\end{example}

The skew-group algebra $R\#G$ is Morita equivalent to the basic algebra $eR\#Ge$, where $e = \sum e_{j}$ is the sum of all idempotents in $kG$ associated to the irreducible representations of $G$. There is an isomorphism  
\[
	eR\#Ge\cong T_{ekGe}(eV\otimesk kGe)/\langle eM\otimesk kG e\rangle,
\]
which can be seen from the proof of Corollory \ref{cor:no_tensor}, but this is well-known. This basic algebra is also given as the path algebra over the McKay quiver $Q_G$ modulo relations. In the case where $G$ is abelian, we have that $e=1$, and so $R\#G$ is already basic.   The superpotential in $eR\#Ge$ is given by 
\[
	e(\omega\otimesk 1)e,
\] 
see \cite[Lemma 2.2]{BSW10} for a proof.\\

From now on, we assume that $G<SL(n,k)$ is a finite group embedding in $SL(n_1, k)\times SL(n_2,k)$ for some $n_1, n_2\geq 1$ such that $n_1+n_2 = n$. We first show that the McKay quivers $Q_G$ are characterized by the property that for each vertex, there exists a summand of the superpotential going twice around it, an example of which is given in \ref{diagonal}. This gives insights as to why it is impossible to put a grading of bimodule Calabi--Yau algebra of Gorenstein parameter $1$ without forcing the degree $0$ part to be infinite-dimensional. In fact, this property is a key ingredient in the proof of Theorem \ref{no_tensor}. \\

Given a path $q:i\to j$, we let $h(q):=j$ and $t(q):=i$. 

\begin{proposition}
\label{prop:double}
Let $G$ be a finite group embedding in $SL(n_1, k)\times SL(n_2,k)$. For every vertex $j$ in $Q_G$, there exists a path $p:j\to j$ which is a summand of the superpotential $e(\omega\otimesk 1) e$ and that can be written as $p = p_2p_1$, where $h(p_1) = t(p_2) = j$.  
\end{proposition}

\begin{proof}
Let $S_j$ be the simple $kG$-module associated to $e_j$.  Each arrow $a:t(a)\to h(a)$ in the McKay quiver can be identified with a basis in 
\[
	e_{h(a)} V\otimesk kG e_{t(a)}\cong \Hom_{kG}(S_{h(a)}, V\otimesk S_{t(a)}).
\]
Theorefore, each path $p: t(p)\to h(p)$ of length $\ell$ corresponds to a morphism 
\[
	\rho: S_{h(p)}\longrightarrow V^{\otimes_S \ell}\otimesk S_{t(p)}.
\]
We also consider the morphism 
\begin{align*}
	\alpha^{\ell}:V^{\otimes_S \ell}\otimesk S_{j}&\longrightarrow \quad\bigwedge\nolimits^{\ell} V\otimesk S_{j}\\
		(v_{\ell}\otimes\cdots \otimes v_1)\otimes s&\longmapsto (v_{\ell}\wedge\cdots\wedge v_1)\otimes s.
\end{align*}
For every vertex $j$, the summand $e_j(\omega\otimesk 1)e_j$ of the superpotential, viewed as an element in $K_n$, induces a morphism 
\begin{align*}
	S_j&\longrightarrow V^{\otimes_S n}\otimesk S_j \\
	e_j&\longmapsto \,\,\,\,\, e_j\omega\otimesk e_j
\end{align*} 
and the composition with $\alpha^n$ is non-zero, since $\omega$ corresponds to $x_n\wedge\cdots \wedge x_1$ via the isomorphism $K_n\cong \bigwedge^n V$. \\
 	
Since $G$ embeds in $SL(n_1, k)\times SL(n_2, k)$, there is a decomposition $V\cong V_1\oplus V_2$ such that $\bigwedge^{n_1} V_1 \cong \bigwedge^{n_2} V_2\cong k$. Now let $p:j\to j$ be a path of length $n$ which is a summand of $e_j(\omega\otimesk 1)e_j$ such that $p = p_2p_1$, where $p_i$ is a path of length $n_i$ in $e_{h(p_i)}V_i^{\otimes_{S} n_i}\otimesk kGe_{t(p_i)}$, for $i=1,2$. Such a decomposition exists because the superpotential is super-cyclically symmetric. Then $p_1$ corresponds to a morphism 
\[
	\rho_1: S_{h(p_1)}\longrightarrow V_1^{\otimes_S n_1}\otimesk S_{j}.
\]
Now the map 
\[
	\alpha^{n_1}\circ\rho_1: S_{h(p_1)}\longrightarrow \bigwedge\nolimits^{n_1} V_1\otimesk S_j\cong S_j
\]
is non-zero since $p$ is a summand of $e_j(\omega\otimesk 1)e_j$. By Schur's lemma, this is an isomorphism, so $j = h(p_1)$. 
\end{proof}

We are now ready to extend the class of subgroups of $SL(n,k)$ for which we can show that the skew-group algebra is not a preprojective algebra.  

\begin{theorem}
\label{important}
Let $G$ be a finite group embedding in $SL(n_1, k)\times SL(n_2,k)$. The skew-group algebra $R\#G$ does not admit a grading structure of $n$-preprojective algebra.
\end{theorem}

\begin{remark}
In the case where $G$ is cyclic, we obtain using Lemma \ref{condition} a partial converse to Theorem \ref{AIR}, which is a full converse if $n\leq 4$.  
\end{remark}

\begin{proof}
Suppose that $G = G_1\times_H G_2$, where $G_1<SL(n_1, k)$ and $G_2< SL(n_2, k)$. Then the given representation decomposes as $V \cong V_1\oplus V_2$, where $V_i$ is an $n_i$-dimensional vector space, $i = 1, 2$. Assume by contradiction that $G$ has a grading structure of preprojective algebra. Then in particular its degree $0$ part is finite-dimensional.\\

By Lemma \ref{gen}, the superpotential $\omega\otimes 1$ is a $kG$-bimodule generator of $\Lambda^n V\otimesk kG$ in degree $1$.  It corresponds to elements of the same degree in $T_{kG}(V\otimesk kG)$, by Lemma \ref{key}. Thus, for any idempotent $e_{\ell}$ corresponding to the irreducible representations, the summand $(x_n\otimes x_{n-1}\otimes\cdots\otimes x_1)\otimes e_{\ell}$ is in degree $1$. Now, using the fact that $G$ embeds in $SL(n_1, k)\times SL(n_2, k)$, we get a decomposition 
\[
	(x_n\otimes x_{n-1}\otimes \cdots \otimes x_1)\otimes e_{\ell} = (x_n\otimes x_{n-1}\otimes \cdots\otimes x_{n_2+1})\otimes e_{\ell}\cdot(x_{n_2}\otimes\cdots\otimes x_1)\otimes e_{\ell},
\]
and each component commutes with the action of $kG$. By additivity of the degrees, one of these two components must be in degree $0$, and the other must be in degree $1$. Suppose without loss of generality that 
\[
	(x_n\otimes x_{n-1}\otimes \cdots\otimes x_{n_2+1})\otimes e_{\ell}
\] 
is in degree $0$. Then, 
\begin{equation*}
	\{1\otimes e_{\ell}, (x_n\otimes x_{n-1}\otimes \cdots\otimes x_{n_2+1})\otimes e_{\ell}, \,(x_n\otimes x_{n-1}\otimes\cdots \otimes x_{n_2+1})^2\otimes e_{\ell}, 
	(x_n\otimes x_{n-1}\otimes \cdots\otimes x_{n_2+1})^3\otimes e_{\ell}, \,\ldots\}
\end{equation*}
is an infinite set of linearly independent elements of degree $0$ in $A$. Therefore, $A$ does not have a structure of preprojective algebra. 
\end{proof} 

\begin{remark}
In (\cite{HIO14}, Question 5.9), the authors conjecture that the quiver of an $(n-1)$-hereditary algebra must be acyclic. If we assume this conjecture to be true, we can use the previous proof to show that if $G$ embeds in a product of special linear groups, then $A:=R\#G$ is not Morita equivalent to a basic preprojective algebra. In fact, by Proposition \ref{prop:double}, there are paths that go twice around each vertex, and the previous proof shows that one of these cycles must be in degree $0$. 
\end{remark}

\begin{example}
Let $G= \langle\frac{1}{3} (1,2,1,2)\rangle<SL(2,k)\times SL(2,k)$ be the group described in Example \ref{diagonal}. The skew-group algebra $R\# G$ does not have a structure of preprojective algebra. By Proposition \ref{prop:double}, paths of length $4$ in the superpotential go twice through the same vertex. However, since they must be in degree $1$, there exists an oriented cycle in degree $0$, which generates an infinite-dimensional subalgebra in degree $0$.
\end{example}

Combining with Proposition \ref{thm:preproj_morita}, we deduce the main theorem of this section. 

\begin{corollary}
\label{cor:main_section5}
Let $G$ be a finite group embedding in $SL(n_1, k)\times SL(n_2,k)$. The skew-group algebra $R\#G$ is not Morita equivalent to a higher preprojective algebra. 
\end{corollary}


\bibliographystyle{halpha}

\bibliography{bib_thibault}


\end{document}